\numberwithin{equation}{section}
\newtheorem{thm}{Theorem}[section]
\newtheorem{prop}[thm]{Proposition}
\newtheorem{lem}[thm]{Lemma}
\DeclareMathOperator{\dom}{dom}
\def\1{{\bf 1}}
\begin{document}

\title*{Trotter-Kato product formulae in Dixmier ideal\\
\vspace{0.5cm}
\normalsize \it On the occasion of the 100th birthday of Tosio Kato}

\titlerunning{Trotter-Kato product formulae in Dixmier ideal}

\author{Valentin A.Zagrebnov}
\authorrunning{V.A.Zagrebnov }
\institute{V.A.Zagrebnov  \at Institut de Math\'{e}matiques de Marseille (UMR 7373) - AMU,
Centre de Math\'{e}matiques et Informatique - Technop\^{o}le Ch\^{a}teau-Gombert -
39, rue F. Joliot Curie, 13453 Marseille Cedex 13, France  \\
\email{Valentin.Zagrebnov@univ-amu.fr}}

\maketitle


\abstract{It is shown that for a certain class of the Kato functions the Trotter-Kato product formulae
converge in {Dixmier ideal} $\mathcal{C}_{1, {\infty}}$ in topology, which is defined by the
$\|\cdot\|_{1, \infty}$-norm. Moreover, the rate of convergence in this topology inherits the error-bound
estimate for the corresponding operator-norm convergence.}

\numberwithin{equation}{section}
\renewcommand{\theequation}{\arabic{section}.\arabic{equation}}
\section{Preliminaries. Symmetrically-normed ideals }\label{S0}
Let $\mathcal{H}$ be a separable Hilbert space.
For the first time the Trotter-Kato product formulae in {Dixmier ideal}
$\mathcal{C}_{1, {\infty}}(\mathcal{H})$, were shortly discussed in conclusion of the paper \cite{NeiZag1999}.
This remark was a program addressed to extension of results, which were known for the von Neumann-Schatten ideals
$\mathcal{C}_{p}(\mathcal{H})$, $p \geq 1$ since \cite{Zag1988}, \cite{NeiZag1990}.

Note that a subtle point of this program is the question about the rate of convergence in the corresponding
topology. Since the limit of the Trotter-Kato product formula is a strongly continuous semigroup, for the
von Neumann-Schatten ideals this topology is the trace-norm $\|\cdot\|_{1}$ on the trace-class ideal
$\mathcal{C}_{1}(\mathcal{H})$. In this case the limit is a Gibbs semigroup \cite{Zag2003}.

For self-adjoint Gibbs semigroups the rate of convergence was estimated for the first time
in \cite{DoIchTam1998} and \cite{IchTam1998}. The authors considered the case of the Gibbs-Schr\"{o}dinger
semigroups. They scrutinised in these papers a dependence of the rate of convergence for the (exponential)
Trotter formula on the smoothness of the potential in the Schr\"{o}dinger generator.

The first abstract result in this direction was due to \cite{NeiZag1999}. In this paper a general scheme of
\textit{lifting} the operator-norm rate convergence for the Trotter-Kato product formulae was proposed
and advocated for estimation the rate of the trace-norm convergence. This scheme was then improved and
extended in \cite{CacZag2001} to the case of \textit{nonself-adjoint} Gibbs semigroups.

The aim of the present note is to elucidate the question about the existence of other
then the von Neumann-Schatten proper two-sided ideals $\mathfrak{I}(\mathcal{H})$
of $\mathcal{L}(\mathcal{H})$ and then to prove the (non-exponential) Trotter-Kato product formula in
topology of these ideals together with estimate of the corresponding rate of convergence.
Here a particular case of the Dixmier ideal $\mathcal{C}_{1, {\infty}}(\mathcal{H})$ \cite{Dix1981},
\cite{Con1994}, is considered.
To specify this ideal we recall in Section \ref{S1} the notion of \textit{singular} trace and then of
the Dixmier trace \cite{Dix1966}, \cite{CarSuk2006}, in Section \ref{S2}. Main results about the Trotter-Kato
product formulae in the Dixmier ideal $\mathcal{C}_{1, {\infty}}(\mathcal{H})$ are collected in
Section \ref{S3}. There the arguments based on the lifting scheme \cite{NeiZag1999} (Theorem 5.1) are refined
for proving the Trotter-Kato product formulae convergence in the $\|\cdot\|_{1, \infty}$-topology with the
rate, which is \textit{inherited} from the operator-norm convergence.

To this end, in the rest of the present section we recall an important auxiliary tool: the concept of
\textit{symmetrically-normed} ideals, see e.g. \cite{GohKre1969}, \cite{Sim2005}.

Let $c_0 \subset l^{\infty}(\mathbb{N})$ be the subspace of bounded sequences
$\xi = \{\xi_j\}^\infty_{j=1} \in l^{\infty}(\mathbb{N})$ of real numbers, which tend to \textit{zero}.
We denote by $c_f$ the subspace of $c_0$ consisting of all sequences with \textit{finite} number of non-zero
terms (\textit{finite sequences}).
\begin{definition}\label{def6-2.1}
A real-valued function $\phi: \xi \mapsto \phi(\xi)$ defined on $c_f$ is called a {\em norming function}
\index{function!(symmetric) norming}
if it has the following properties:
\begin{eqnarray}
& & \phi(\xi)         >   0, \qquad \forall \xi \in c_f, \quad \xi \not= 0, \label{6-2.1}\\
& & \phi(\alpha\xi)      =   |\alpha|\phi(\xi), \qquad \forall \xi \in c_f, \quad \forall \alpha \in \mathbb{R},
\label{6-2.2}\\
& & \phi(\xi + \eta) \le  \phi(\xi) + \phi(\eta), \qquad \forall \xi,\eta \in c_f, \label{6-2.3}\\
& & \phi(1,0,\ldots)  =   1. \label{6-2.4}
\end{eqnarray}
A norming function $\phi$ is called to be {\em symmetric} if it has the
additional property
\begin{equation}\label{6-2.5}
\phi(\xi_1,\xi_2,...,\xi_n,0,0,\ldots) = \phi(|\xi_{j_1}|,|\xi_{j_2}|,...,|\xi_{j_n}|,0,0,\ldots)
\end{equation}
for any $\xi \in c_f$ and any permutation $j_1,j_2,\ldots , j_n$ of integers $1,2,\ldots , n$.
\end{definition}

It turns out that for any {\em symmetric norming} function $\phi$
and for any elements $\xi,\eta$ from the positive \textit{cone} $c^+$ of non-negative, non-increasing
sequences
such that $\xi,\eta \in c_f$ obey $\xi_1 \ge \xi_2 \ge \ldots \ge 0$,
$\eta_1 \ge \eta_2 \ge \ldots \ge 0$
and
\begin{equation}\label{6-2.5-1}
\sum^n_{j=1} \xi_j \le \sum^n_{j=1} \eta_j, \qquad n = 1,2,\ldots \ ,
\end{equation}
one gets the Ky Fan {inequality} \cite{GohKre1969} (Sec.3, \S3) :
\begin{equation}\label{6-2.5-2}
\phi(\xi) \le \phi(\eta) \ .
\end{equation}

Moreover, (\ref{6-2.5-2}) together with the properties (\ref{6-2.1}), (\ref{6-2.2}) and (\ref{6-2.4}) 
yield inequalities
\begin{equation}\label{6-2.5-3}
\xi_1 \le \phi(\xi) \le \sum^\infty_{j=1} \xi_j, \qquad \xi \in c_{f}^{+}:= c_{f}\cap c^{+}.
\end{equation}
Note that the left- and right-hand sides of (\ref{6-2.5-3}) are the simplest examples of symmetric
norming functions on domain $c_{f}^{+}$:
\begin{equation}\label{6-2.5-4}
\phi_{\infty}(\xi):= \xi_1 \ \ \ {\rm{and}} \ \ \ \phi_{1}(\xi):= \sum^\infty_{j=1} \xi_j \ .
\end{equation}
By Definition \ref{def6-2.1} the observations (\ref{6-2.5-3}) and (\ref{6-2.5-4}) yield
\begin{eqnarray}\label{6-2.5-5}
&&\phi_{\infty}(\xi):= \max_{j \geq 1}|\xi_j| \ , \ \  \phi_{1}(\xi):= \sum^\infty_{j=1} |\xi_j|
\ \ , \ \\
&&  \phi_{\infty}(\xi)\leq \phi(\xi) \leq \phi_{1}(\xi) \ , \ \ {\rm{for \ all}} \ \ \ \xi \in c_{f} \ .
\nonumber
\end{eqnarray}

We denote by $\xi^* := \{\xi^*_1,\xi^*_2,\ldots \ \}$ a decreasing \textit{rearrangement}:
$\xi^*_1 = \sup_{j\geq 1}|\xi_j|\ $, $\xi^*_1 + \xi^*_2 = \sup_{i\not=j}\{|\xi_i| + |\xi_j|\}, \ldots \ $,
of the sequence of absolute values $\{|\xi_n|\}_{n\geq 1}$, i.e., $\xi^*_1 \ge \xi^*_2 \ge \ldots \ $.
Then $\xi \in c_f$ implies $\xi^* \in c_f$ and by (\ref{6-2.5}) one obtains also that
\begin{equation}\label{6-2.10}
\phi(\xi) = \phi(\xi^*), \qquad \xi \in c_f \ .
\end{equation}
Therefore, any symmetric norming function $\phi$ is uniquely defined by its values on
the positive cone $c^+$.

Now, let $\xi = \{\xi_1,\xi_2,\ldots\} \in c_0$. We define
\begin{equation*}\label{6-2.6}
\xi^{(n)} := \{\xi_1,\xi_2,\ldots,\xi_n,0,0,\ldots \ \} \in c_{f} \ .
\end{equation*}
Then if $\phi$ is a symmetric norming function, we define
\begin{equation}\label{6-2.7}
c_\phi := \{\xi \in c_0: \sup_n\phi(\xi^{(n)}) < + \infty\}.
\end{equation}
Therefore, one gets
\begin{equation*}\label{6-2.8}
c_f \subseteq c_\phi \subseteq c_0 \subset l^{\infty}.
\end{equation*}

Note that by (\ref{6-2.5})-(\ref{6-2.5-2}) and  (\ref{6-2.7}) one gets
\begin{equation*}\label{6-2.8-1}
\phi(\xi^{(n)})\leq \phi(\xi^{(n+1)})\leq \sup_n\phi(\xi^{(n)})\ , \ {\rm{for \ any}} \ \xi \in c_\phi \ .
\end{equation*}
Then the limit
\begin{equation}\label{6-2.9}
\phi(\xi) := \lim_{n\to\infty} \phi(\xi^{(n)})\ , \qquad \xi \in c_\phi,
\end{equation}
exists and $\phi(\xi) = \sup_n\phi(\xi^{(n)})$, i.e. the symmetric norming function $\phi$ is a \textit{normal}
functional on the set $c_\phi$ (\ref{6-2.7}), which is a linear space over $\mathbb{R}$.

By virtue of (\ref{6-2.3}) and (\ref{6-2.5-5}) one also gets that any symmetric norming function
is \textit{continuous} on $c_f$:
\begin{equation*}\label{6-2.9-1}
|\phi(\xi) - \phi(\eta)| \leq \phi(\xi - \eta) \leq \phi_{1}(\xi - \eta) \ , \ \forall \xi,\eta \in c_f \ .
\end{equation*}

Suppose that $X$ is a compact operator, i.e. $X \in \mathcal{C}_{\infty}(\mathcal{H})$. Then we denote by
\begin{equation*}\label{6-2.11}
s(X) := \{s_1(X),s_2(X),\ldots \ \} ,
\end{equation*}
the sequence of \textit{singular values} of $X$ counting multiplicities.
We always assume that
\begin{equation*}\label{6-2.12}
s_1(X) \ge s_2(X) \ge \ldots \ge s_n(X) \ge \ldots \ .
\end{equation*}

To define \textit{symmetrically-normed} ideals of the compact operators $\mathcal{C}_{\infty}(\mathcal{H})$ we
introduce the notion of a symmetric norm.
\begin{definition}\label{def6-2.1-2}
Let $\mathfrak{I}$ be a two-sided ideal of $\mathcal{C}_{\infty}(\mathcal{H})$. A functional
$\|\cdot\|_{sym}: \mathfrak{I} \rightarrow \mathbb{R}^{+}_{0}$ is called a \textit{symmetric norm} if besides
the usual properties of the operator norm $\|\cdot\|$:
\begin{eqnarray*}
& & \|X\|_{sym}>0, \qquad \forall X \in \mathfrak{I} , \quad X \not= 0, \label{6-2.12-1}\\
& &  \|\alpha X\|_{sym}      =   |\alpha|\|X\|_{sym} \ , \qquad \forall X \in \mathfrak{I} ,
\quad \forall \alpha \in \mathbb{C}, \label{6-2.12-2}\\
& & \|X + Y\|_{sym}  \le  \|X\|_{sym}  + \|Y\|_{sym}\ , \ \forall X, Y \in \mathfrak{I} , \label{6-2.12-3}
\end{eqnarray*}
it verifies the following additional properties:
\begin{eqnarray}
& & \|A X B\|_{sym}        \leq   \|A\| \| X\|_{sym} \| B\|, \ X \in \mathfrak{I} , \
A,B \in \mathcal{L}(\mathcal{H}), \label{6-2.12-4}\\
& &  \|\alpha X\|_{sym}  =  |\alpha|\|X\| = |\alpha| \ s_1(X) , \
{\rm{for \ any \ rank-one \ operator}} \  X \in \mathfrak{I}. \label{6-2.12-5}
\end{eqnarray}

If the condition (\ref{6-2.12-4}) is replaced by
\begin{eqnarray}\label{6-2.12-4U}
&& \|U X\|_{sym}  = \|X U\|_{sym} = \| X\|_{sym} \ , \  X \in \mathfrak{I} \ , \\
&& \hspace{1cm} {\rm{for \ any \ unitary \ operator}} \ U \ {\rm{on}} \ \mathcal{H} \ , \nonumber
\end{eqnarray}
then, instead of the symmetric norm, one gets definition of \textit{invariant norm} $\|\cdot\|_{inv}$.
\end{definition}

First, we note that the ordinary operator norm $\|\cdot\|$ on any ideal
$\mathfrak{I}\subseteq \mathcal{C}_{\infty}(\mathcal{H})$ is evidently a symmetric norm as well as
an invariant norm.

Second, we observe that in fact, every symmetric norm is invariant. Indeed, for any unitary operators $U$ and
$V$ one gets by (\ref{6-2.12-4}) that
\begin{equation}\label{6-2.12-51}
\|U X V\|_{sym} \leq  \| X\|_{sym} \ , \  X \in \mathfrak{I} \ .
\end{equation}
Since $X = U^{-1}U X V V^{-1}$, we also get $\| X\|_{sym} \leq \|U X V\|_{sym}$, which together with
(\ref{6-2.12-51}) yield (\ref{6-2.12-4U}).

Third, we claim that $ \| X\|_{sym} =  \| X^\ast\|_{sym}$. Let $X = U |X|$ be the polar
representation of the operator $X \in \mathfrak{I}$. Since $U^\ast X = |X|$, then by (\ref{6-2.12-4U}) 
we obtain
$\| X\|_{sym}=\||X|\|_{sym}$. The same line of reasoning applied to the adjoint operator $X^\ast = |X| U^\ast$
yields $\| X^\ast\|_{sym}=\||X|\|_{sym}$, that proves the claim.

Now we can apply the concept of the symmetric norming functions to describe the symmetrically-normed ideals
of the unital algebra of bounded operators $\mathcal{L}(\mathcal{H})$, or in general, the symmetrically-normed
ideals generated by symmetric norming functions. Recall that any \textit{proper} two-sided ideal
$\mathfrak{I}(\mathcal{H})$ of $\mathcal{L}(\mathcal{H})$ is contained in compact operators
$\mathcal{C}_{\infty}(\mathcal{H})$ and contains the set $\mathcal{K}(\mathcal{H})$ of finite-rank operators,
see e.g. \cite{Piet2014}, \cite{Sim2005}:
\begin{equation}\label{6-2.12-5-1}
\mathcal{K}(\mathcal{H}) \subseteq \mathfrak{I}(\mathcal{H}) \subseteq \mathcal{C}_{\infty}(\mathcal{H}) \ .
\end{equation}

To clarify the relation between symmetric norming functions and the symmetrically-normed ideals
we mention that there is an obvious one-to-one correspondence between functions $\phi$ (Definition
\ref{def6-2.1}) on the cone $c^{+}$ and the symmetric norms $\|\cdot\|_{sym}$ on
$\mathcal{K}(\mathcal{H})$. To proceed with a general setting one needs definition of the following
relation.
\begin{definition}\label{def6-2.3}
Let $c_\phi$ be the set of vectors (\ref{6-2.7}) generated by a symmetric norming function $\phi$. We
associate with $c_\phi$ a subset of compact operators
\begin{equation}\label{6-2.13}
\mathcal{C}_{\phi}(\mathcal{H}) := \{X \in \mathcal{C}_{\infty}(\mathcal{H}): s(X) \in c_\phi\} \ .
\end{equation}
\end{definition}

This definition implies that the set $\mathcal{C}_{\phi}(\mathcal{H})$ is a proper two-sided ideal of 
the algebra $\mathcal{L}(\mathcal{H})$ of all bounded operators on $\mathcal{H}$. Setting, see (\ref{6-2.9}),
\begin{equation}\label{6-2.14}
\|X\|_\phi := \phi(s(X))\ , \qquad X \in \mathcal{C}_{\phi}(\mathcal{H}) \ ,
\end{equation}
one obtains the \textit{symmetric norm}: $\|\cdot\|_{sym} = \|\cdot\|_\phi \ $, on the ideal
$\mathfrak{I} = \mathcal{C}_{\phi}(\mathcal{H})$ (Definition \ref{def6-2.1-2}) such that
this symmetrically-normed ideal becomes a Banach space. Then in accordance with (\ref{6-2.12-5-1}) and
(\ref{6-2.13}) we obtain by (\ref{6-2.5-5}) that
\begin{equation}\label{6-2.14-1}
\mathcal{K}(\mathcal{H}) \subseteq \mathcal{C}_{1}(\mathcal{H}) \subseteq \mathcal{C}_{\phi}(\mathcal{H})
\subseteq \mathcal{C}_{\infty}(\mathcal{H})\ .
\end{equation}
Here the trace-class operators $\mathcal{C}_{1}(\mathcal{H}) :=  \mathcal{C}_{\phi_1}(\mathcal{H})$,
where the symmetric norming function $\phi_1$ is defined in (\ref{6-2.5-4}), and
\begin{equation*}\label{6-2.14-2}
\|X\|_\phi \le \|X\|_1 \ , \qquad X \in \mathcal{C}_{1}(\mathcal{H})\ .
\end{equation*}
%
%
\begin{remark}\label{rem6-2.1-0}
By virtue of inequality (\ref{6-2.5-2}) and by definition of symmetric norm (\ref{6-2.14})
the so-called {\em dominance} property
holds: if $X \in \mathcal{C}_{\phi}(\mathcal{H})$, $Y \in \mathcal{C}_{\infty}(\mathcal{H})$ and
\begin{equation*}\label{6-2.14-3}
\sum^n_{j=1}s_j(Y) \le \sum^n_{j=1}s_j(X)\ , \qquad n =1,2,\ldots \ ,
\end{equation*}
then $Y \in \mathcal{C}_{\phi}(\mathcal{H})$ and $\|Y\|_\phi \le \|X\|_\phi$.
\end{remark}
\begin{remark}\label{rem6-2.1}
To distinguish in (\ref{6-2.14-1}) \textit{nontrivial} ideals $\mathcal{C}_{\phi}$ one needs some
criteria based on the properties of $\phi$ or of the norm $\|\cdot\|_\phi$.
For example, any symmetric norming function (\ref{6-2.10}) defined by
\begin{equation*}\label{6-2.14-4}
\phi^{(r)}(\xi) := \sum_{j=1}^{r} \ \xi^*_j \ , \qquad \xi \in c_f \ ,
\end{equation*}
generates for arbitrary fixed $r\in \mathbb{N}$ the symmetrically-normed ideals, which are trivial
in the sense that $\mathcal{C}_{\phi^{(r)}}(\mathcal{H}) = \mathcal{C}_{\infty}(\mathcal{H})$.
Criterion for an operator $A$ to belong to a nontrivial ideal $\mathcal{C}_{\phi}$ is
\begin{equation}\label{6-2.14-5}
M = \sup_{m\geq 1} \|P_m A P_m\|_\phi < \infty \ ,
\end{equation}
where $\{P_m\}_{m\geq 1}$ is a monotonously increasing sequence of the finite-dimensional orthogonal
projectors on $\mathcal{H}$ strongly convergent to the identity operator \cite{GohKre1969}. Note that for
$A \in \mathcal{C}_{\infty}$ the condition (\ref{6-2.14-5}) is trivial.
\end{remark}

We consider now a couple of examples to elucidate the concept of the symmetrically-normed
ideals $\mathcal{C}_{\phi}(\mathcal{H})$ generated by the symmetric norming functions $\phi$ and the r\^{o}le
of the functional \textit{trace} on these ideals.
\begin{example}\label{ex6-2.1}
The von Neumann-Schatten ideals $\mathcal{C}_{p}(\mathcal{H})$ \cite{Schat1970}. These ideals of
$\mathcal{C}_{\infty}(\mathcal{H})$ are generated by symmetric norming functions $\phi(\xi) := \|\xi\|_{p}$,
where
\begin{equation*}\label{6-2.15}
\|\xi\|_{p} = \left(\sum^\infty_{j=1} |\xi_j|^p\right)^{1/p}, \qquad \xi \in c_f,
\end{equation*}
for $1 \le p < +\infty$,  and by
\begin{equation*}\label{6-2.16}
\|\xi\|_{\infty} = \sup_j|\xi_j|, \qquad \xi \in c_f,
\end{equation*}
for $p = +\infty$.
Indeed, if we put $\{\xi_{j}^\ast := s_j(X)\}_{j\geq 1}$, for $X\in \mathcal{C}_{\infty}(\mathcal{H})$,
then the symmetric norm $\|X\|_{\phi} = \|s(X)\|_{p}$ coincides with
$\|X\|_{p}$ and the corresponding symmetrically-normed ideal $\mathcal{C}_{\phi}(\mathcal{H})$
is identical to the von~Neumann-Schatten class $\mathcal{C}_{p}(\mathcal{H})$.

By definition, for any $X \in \mathcal{C}_{p}(\mathcal{H})$ the
trace: $|X|\mapsto {\rm{Tr}} |X| = \sum_{j\geq 1} s_j(X) \geq 0$. The trace norm
$\|X\|_{1} = {\rm{Tr}} |X|$ is finite on the trace-class operators $\mathcal{C}_{1}(\mathcal{H})$ and
it is \textit{infinite} for $X \in \mathcal{C}_{p>1}(\mathcal{H})$. We say that for $p>1$ the
von Neumann-Schatten ideals admit \textit{no} trace, whereas for $p=1$ the map: $X \mapsto {\rm{Tr}}\, X$,
exists and it is continuous in the $\|\cdot\|_{1}$-topology.

Note that by virtute of the $\rm{Tr}$-linearity the trace norm:
$\mathcal{C}_{1, +}(\mathcal{H})\ni X \mapsto \|X\|_{1}$ is \textit{linear}
on the positive cone $\mathcal{C}_{1, +}(\mathcal{H})$ of the trace-class operators.
\end{example}
\begin{example}\label{ex6-2.2}
Now we consider symmetrically-normed ideals $\mathcal{C}_{\Pi}(\mathcal{H})$.
To this aim let $\Pi = \{\pi_j\}^\infty_{j=1} \in c^+$ be a non-increasing sequence of
positive numbers with $\pi_1 = 1$. We associate with $\Pi$ the function
\begin{equation}\label{6-2.17}
\phi_\Pi(\xi) = \sup_n\left\{\frac{1}{\sum^n_{j=1}\pi_j}\sum^n_{j=1}\xi^*_j\right\}, \qquad \xi \in c_f.
\end{equation}
It turns out that $\phi_\Pi$ is a symmetric norming function. Then the corresponding to (\ref{6-2.7}) set
$c_{\phi_\Pi}$ is defined by
\begin{equation*}\label{6-2.18}
c_{\phi_\Pi} := \left\{\xi \in c_f: \sup_n\frac{1}{\sum^n_{j=1}\pi_j}\sum^n_{j=1}\xi^*_j < +\infty \right\} \ .
\end{equation*}
Hence, the two-sided symmetrically-normed ideal
$\mathcal{C}_{\Pi}(\mathcal{H}):= \mathcal{C}_{\phi_\Pi}(\mathcal{H})$
generated by symmetric norming function (\ref{6-2.17}) consists of all those compact operators $X$ that
\begin{equation}\label{6-2.19}
\|X\|_{\phi_\Pi} := \sup_n\frac{1}{\sum^n_{j=1}\pi_j}\sum^n_{j=1} s_j(X) < +\infty \ .
\end{equation}
This equation defines a symmetric norm $\|X\|_{sym}=\|X\|_{\phi_\Pi}$ on the ideal
$\mathcal{C}_{\Pi}(\mathcal{H})$, see Definition \ref{def6-2.1-2}.
%
%

Now let $\Pi = \{\pi_j\}^\infty_{j=1}$, with $\pi_1 =1$, satisfy
\begin{equation}\label{6-2.20}
\sum^\infty_{j=1} \pi_j = +\infty \qquad \mbox{and} \qquad \lim_{j\to\infty} \pi_j = 0 \ .
\end{equation}
Then the ideal $\mathcal{C}_{\Pi}(\mathcal{H})$ is \textit{nontrivial}:
$\mathcal{C}_{\Pi}(\mathcal{H}) \not= \mathcal{C}_{\infty}(\mathcal{H})$ and
$\mathcal{C}_{\Pi}(\mathcal{H}) \not= \mathcal{C}_{1}(\mathcal{H})$, see Remark \ref{rem6-2.1}, and one has
\begin{equation}\label{6-2.20-1}
\mathcal{C}_{1}(\mathcal{H}) \subset \mathcal{C}_{\Pi}(\mathcal{H}) \subset
\mathcal{C}_{\infty}(\mathcal{H}) \ .
\end{equation}

If in addition to (\ref{6-2.20}) the sequence $\Pi = \{\pi_j\}^\infty_{j=1}$ is \textit{regular},
i.e. it obeys
\begin{equation}\label{6-2.21}
\sum^n_{j=1} \pi_j = O(n\pi_n) \ , \qquad n \rightarrow \infty \ ,
\end{equation}
then $X \in \mathcal{C}_{\Pi}(\mathcal{H})$  {if and only if}
\begin{equation}\label{6-2.22}
s_n(X) = O(\pi_n) \ , \qquad n \rightarrow \infty \ ,
\end{equation}
cf. condition (\ref{6-2.14-5}). On the other hand, the asymptotics
\begin{equation*}\label{6-2.22-1}
s_n(X) = o(\pi_n) \ , \qquad n \rightarrow \infty \ ,
\end{equation*}
{{implies that $X$ belongs to:
\begin{equation*}
\mathcal{C}_{\Pi}^{0}(\mathcal{H}):= \{X \in \mathcal{C}_{\Pi}(\mathcal{H}):
\lim_{n \rightarrow \infty}\frac{1}{\sum^n_{j=1}\pi_j}\sum^n_{j=1} s_j(X) = 0 \},
\end{equation*}
such that $\mathcal{C}_{1}(\mathcal{H})\subset \mathcal{C}_{\Pi}^{0}(\mathcal{H}) \subset 
\mathcal{C}_{\Pi}(\mathcal{H})$.}}
\end{example}
\begin{remark}\label{rem6-2.2}
A natural choice of the sequence $\{\pi_j\}^\infty_{j=1}$ that satisfies (\ref{6-2.20}) is
$\pi_j = j^{-\alpha}$, $0 < \alpha \le 1$. Note that if $0 < \alpha < 1$, then the sequence
$\Pi = \{\pi_j\}^\infty_{j=1}$ satisfies (\ref{6-2.21}), i.e. it is regular for
$\varepsilon = 1 -\alpha$.
Therefore, the two-sided symmetrically-normed ideal $\mathcal{C}_{\Pi}(\mathcal{H})$ generated by symmetric
norming function (\ref{6-2.17}) consists of all those compact operators $X$, which singular values obey
(\ref{6-2.22}):
\begin{equation}\label{6-2.23}
s_n(X) = O(n^{-\alpha}), \quad 0 < \alpha < 1, \quad n \rightarrow \infty \ .
\end{equation}
Let $\alpha = 1/p \, , \ p>1$. Then the corresponding to (\ref{6-2.23}) symmetrically-normed ideal defined by
\begin{equation*}\label{6-2.23-1}
\mathcal{C}_{p,\infty}(\mathcal{H}) := \{X \in \mathcal{C}_{\infty}(\mathcal{H}):
s_n(X) = O(n^{-1/p}), \  p>1 \} \ ,
\end{equation*}
is known as the \textit{weak}-$\mathcal{C}_{p}$ ideal \cite{Piet2014}, \cite{Sim2005}.
\end{remark}

Whilst by virtue of (\ref{6-2.23}) the weak-$\mathcal{C}_{p}$ ideal admit no trace, definition (\ref{6-2.19})
implies that for the regular case $p > 1$ a symmetric norm on $\mathcal{C}_{p,\infty}(\mathcal{H})$ is
equivalent to
\begin{equation}\label{6-2.24}
\|X\|_{p,\infty} = \sup_n \frac{1}{n^{1- 1/p}}\sum^n_{j=1}s_j(X) \ ,
\end{equation}
and it is obvious that $\mathcal{C}_{1}(\mathcal{H}) \subset \mathcal{C}_{p,\infty}(\mathcal{H}) \subset
\mathcal{C}_{\infty}(\mathcal{H})$. {{Taking into account the H\"{o}lder inequality one can to refine these 
inclusions for $1\leq q \leq p$ as follows:
$\mathcal{C}_{1}(\mathcal{H})\subseteq \mathcal{C}_{q}(\mathcal{H}) \subseteq 
\mathcal{C}_{p,\infty}(\mathcal{H}) \subset \mathcal{C}_{\infty}(\mathcal{H})$.}} 
\section{Singular traces }\label{S1}

{{Note that (\ref{6-2.24}) implies:
$\mathcal{C}_{1}(\mathcal{H})\ni A \mapsto \|A\|_{p,\infty} < \infty$, but any related 
to the ideal $\mathcal{C}_{p,\infty}(\mathcal{H})$ linear, positive,
and unitarily invariant functional (\textit{trace}) is {zero} on the 
set of finite-rank operators $\mathcal{K}(\mathcal{H})$, or trivial.}}
We remind that these not \textit{normal} traces:
\begin{equation}\label{6-2.24-1}
{\rm{Tr}} {_{\omega}}(X):= \omega (\{n^{-1 + 1/p}\sum^{n}_{j=1}\ s_j(X)\}_{n=1}^{\infty}) \ ,
\end{equation}
are called \textit{singular}, \cite{Dix1966}, \cite{LoSuZa2013}. Here $\omega$ is an \textit{appropriate} 
linear positive normalised functional (\textit{state}) on the Banach space $l^{\infty}(\mathbb{N})$ of 
bounded sequences. Recall that the set of the states 
$\mathcal{S}(l^{\infty}(\mathbb{N}))\subset (l^{\infty}(\mathbb{N}))^*$,
where $(l^{\infty}(\mathbb{N}))^*$ is dual of the Banach space $l^{\infty}(\mathbb{N})$.
The singular trace (\ref{6-2.24-1}) is continuous in topology defined by the norm (\ref{6-2.24}).
\begin{remark}\label{rem6-2.3}
(a) The \textit{weak}-$\mathcal{C}_{p}$  ideal, which is defined for $p=1$ by
\begin{equation}\label{6-2.25}
\mathcal{C}_{1,\infty}(\mathcal{H}) := \{X \in \mathcal{C}_{\infty}(\mathcal{H}): \sum^n_{j=1}s_j(X) =
O(\ln(n)),
\ n \rightarrow \infty\} \ ,
\end{equation}
has a special interest. Note that since $\Pi = \{j^{-1}\}^\infty_{j=1}$ does {not} satisfy
(\ref{6-2.21}), the characterisation $s_n(X) = O(n^{-1})$, is \textit{not} true,
see (\ref{6-2.22}), (\ref{6-2.23}). In this case the equivalent norm can be defined on the
ideal (\ref{6-2.25}) as
\begin{equation}\label{6-2.26}
\|X\|_{1,\infty} := \sup_{n \in \mathbb{N}}\frac{1}{1 +\ln(n)}\sum^n_{j=1}s_j(X) \ .
\end{equation}
By, virtute of (\ref{6-2.20-1}) and Remark \ref{rem6-2.2} one gets that
$\mathcal{C}_{1}(\mathcal{H}) \subset \mathcal{C}_{1,\infty}(\mathcal{H})$ and that
$\mathcal{C}_{1}(\mathcal{H})\ni A \mapsto \|A\|_{1,\infty} < \infty$.

(b) In contrast to linearity of the trace-norm $\|\cdot\|_{1}$ on the positive cone 
$\mathcal{C}_{1, +}(\mathcal{H})$, see Example \ref{ex6-2.1}, the map $X \mapsto \|X\|_{1,\infty}$ on the 
positive cone $\mathcal{C}_{1,\infty, +}(\mathcal{H})$ is \textit{not} linear. Although this map is 
homogeneous: $\alpha A \mapsto \alpha \|A\|_{1,\infty}$, $\alpha \geq 0$,   
for $A,B \in \mathcal{C}_{1,\infty, +}(\mathcal{H})$ one gets that in general 
$\|A + B\|_{1,\infty} \neq \|A\|_{1,\infty} + \|B\|_{1,\infty}$. 

But it is known that on the space $l^{\infty}(\mathbb{N})$
there exists a state $\omega \in \mathcal{S}(l^{\infty}(\mathbb{N}))$  such that the map
\begin{equation}\label{6-2.26-1}
X \mapsto {\rm{Tr}} {_{\omega}}(X):= \omega (\{(1 +\ln(n))^{-1}\sum^{n}_{j=1}\ s_j(X)\}_{n=1}^{\infty} ) \ ,
\end{equation}
is \textit{linear} and verifies the properties of the (singular) \textit{trace} for any
$X\in \mathcal{C}_{1,\infty}(\mathcal{H})$. We construct $\omega$ in Section \ref{S2}.
This particular choice of the state $\omega$ defines the {\em Dixmier trace} on the space
$\mathcal{C}_{1,\infty}(\mathcal{H})$, which is called, in turn, the \textit{Dixmier ideal}, see e.g.
\cite{CarSuk2006}, \cite{Con1994}. The Dixmier trace (\ref{6-2.26-1}) is obviously continuous in topology
defined by the norm (\ref{6-2.26}).
This last property is basic for discussion in Section \ref{S3} of the Trotter-Kato product formula in the
$\|\cdot\|_{p,\infty}$-topology, for $p \geq 1$.
\end{remark}
\begin{example}\label{ex6-2.3}
With non-increasing sequence of positive numbers $\pi = \{\pi_j\}^\infty_{j=1}$, $\pi_1 =1$, one can
associate the symmetric norming function $\phi_\pi$ given by
\begin{equation*}\label{6-2.27}
\phi_\pi(\xi) := \sum^\infty_{j=1}\pi_j\xi^*_j \ , \qquad \xi \in c_f \ .
\end{equation*}
The corresponding symmetrically-normed ideal we denote by
$\mathcal{C}_{\pi}(\mathcal{H}):= \mathcal{C}_{\phi_\pi}(\mathcal{H})$.

If the sequence $\pi$ satisfies (\ref{6-2.20}), then ideal $\mathcal{C}_{\pi}(\mathcal{H})$ does not
coincide neither with $\mathcal{C}_{\infty}(\mathcal{H})$ nor with $\mathcal{C}_{1}(\mathcal{H})$. If,
in particular, $\pi_j = j^{-\alpha}$, $j = 1,2,\ldots \ $, for $0 < \alpha \le 1$, then the corresponding
ideal is denoted by $\mathcal{C}_{\infty,p}(\mathcal{H})$, $p = 1/\alpha$. The norm on this ideal is
given by
\begin{equation*}\label{6-2.28}
\|X\|_{\infty,p} := \sum^\infty_{j=1} j^{-1/p}\ s_j(X) \ , \ \  \  \ p\in [1, \infty) \ .
\end{equation*}
The symmetrically-normed ideal $\mathcal{C}_{\infty,1}(\mathcal{H})$ is called the {\em Macaev ideal}
\cite{GohKre1969}.
It turns out that the Dixmier ideal $\mathcal{C}_{1, \infty}(\mathcal{H})$ is dual of the Macaev ideal:
$\mathcal{C}_{1, \infty}(\mathcal{H}) = \mathcal{C}_{\infty,1}(\mathcal{H})^* $.
\end{example}
%
\begin{prop}\label{pro7-1.1}
The space $\mathcal{C}_{1,\infty}(\mathcal{H})$ endowed by the norm $\|\cdot\|_{1,\infty}$ is
a Banach space.
\end{prop}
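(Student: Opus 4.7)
The plan is to verify first that $\|\cdot\|_{1,\infty}$ is a norm on $\mathcal{C}_{1,\infty}(\mathcal{H})$ and then to establish completeness by a standard diagonal argument exploiting the fact that this norm dominates the operator norm.

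For the norm axioms, positivity and homogeneity are immediate. The triangle inequality follows from the Ky Fan sum inequality for singular values,
\[
\sum_{j=1}^{n} s_j(X+Y) \le \sum_{j=1}^{n} s_j(X) + \sum_{j=1}^{n} s_j(Y),
\]
which is the standard consequence of (\ref{6-2.5-1})--(\ref{6-2.5-2}) applied to singular-value sequences. Dividing by $1+\ln n$ and taking the supremum over $n$ gives $\|X+Y\|_{1,\infty}\le \|X\|_{1,\infty}+\|Y\|_{1,\infty}$. The case $n=1$ also yields $\|X\| = s_1(X) \le \|X\|_{1,\infty}$, so the $\|\cdot\|_{1,\infty}$-topology is stronger than the operator-norm topology.

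For completeness, let $\{X_k\}_{k\ge 1}$ be Cauchy in $\|\cdot\|_{1,\infty}$. By the domination just noted, $\{X_k\}$ is also Cauchy in operator norm, so by closedness of $\mathcal{C}_\infty(\mathcal{H})$ in $\mathcal{L}(\mathcal{H})$ there exists a compact $X$ with $\|X_k - X\|\to 0$. Weyl's perturbation bound $|s_j(Y)-s_j(Z)|\le \|Y-Z\|$ then gives $s_j(X_k)\to s_j(X)$ for every $j$ and, for each fixed $k$, $s_j(X_k-X_l)\to s_j(X_k-X)$ as $l\to\infty$. Given $\varepsilon>0$, choose $N$ so that $\|X_k-X_l\|_{1,\infty}<\varepsilon$ for all $k,l\ge N$. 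For every fixed $n\in\mathbb{N}$ and $k\ge N$,
\[
\frac{1}{1+\ln n}\sum_{j=1}^{n} s_j(X_k - X) \;=\; \lim_{l\to\infty} \frac{1}{1+\ln n}\sum_{j=1}^{n} s_j(X_k - X_l) \;\le\; \varepsilon,
\]
because the sum is finite and the limit in $l$ passes through it termwise. Taking the supremum over $n$ yields $\|X_k-X\|_{1,\infty}\le \varepsilon$ for $k\ge N$, which shows both that $X_N-X\in \mathcal{C}_{1,\infty}(\mathcal{H})$ (hence $X\in \mathcal{C}_{1,\infty}(\mathcal{H})$ by linearity of the ideal) and that $X_k\to X$ in $\|\cdot\|_{1,\infty}$.

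The main delicate point is the ordering of limits: the limit $l\to\infty$ must be performed at a fixed $n$ inside a finite sum, where Weyl continuity is directly applicable; only afterwards is the supremum in $n$ taken, and it is controlled uniformly by the Cauchy hypothesis. This finite-then-sup structure is essential because one cannot a priori commute the Cesàro-type supremum with a weak limit in $k$, a feature that will also underlie the subtleties of the non-normal Dixmier trace in Section \ref{S2}.
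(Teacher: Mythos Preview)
Your proof is correct and follows the standard route: Horn--Ky Fan for the triangle inequality, domination of the operator norm to extract a compact limit, Weyl's perturbation inequality for pointwise convergence of singular values, and the finite-sum-then-supremum trick to close the Cauchy estimate. The paper itself does not give a proof of this proposition at all; it simply declares the argument ``standard although tedious and long'' and defers to Gohberg--Kre\u{\i}n \cite{GohKre1969}, so your write-up actually supplies what the paper omits. One minor quibble: your final paragraph linking the limit-exchange issue to the non-normality of the Dixmier trace is a pleasant aside but not strictly part of the argument, and could be trimmed without loss.
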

\noindent The proof is quite standard although tedious and long. We address the readers to the corresponding
references, e.g. \cite{GohKre1969}.
\begin{prop}\label{pro7-1.2} The space $\mathcal{C}_{1,\infty}(\mathcal{H})$ endowed by the norm
$\|\cdot\|_{1,\infty}$  is a Banach ideal in the algebra of bounded operators $\mathcal{L}(\mathcal{H})$.
\end{prop}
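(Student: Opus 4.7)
The plan is to leverage Proposition \ref{pro7-1.1} (which already supplies completeness in the $\|\cdot\|_{1,\infty}$-norm) and to verify the remaining two requirements for a Banach ideal: that $\mathcal{C}_{1,\infty}(\mathcal{H})$ is algebraically a two-sided ideal in $\mathcal{L}(\mathcal{H})$, and that the norm satisfies the ``sandwich'' estimate $\|AXB\|_{1,\infty}\le \|A\|\,\|X\|_{1,\infty}\,\|B\|$ for $A,B\in\mathcal{L}(\mathcal{H})$ and $X\in \mathcal{C}_{1,\infty}(\mathcal{H})$, cf.~\eqref{6-2.12-4}.

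First, since $\mathcal{C}_{1,\infty}(\mathcal{H})\subseteq\mathcal{C}_{\infty}(\mathcal{H})$ by definition \eqref{6-2.25}, and since compact operators form a two-sided ideal in $\mathcal{L}(\mathcal{H})$, one has $AXB\in \mathcal{C}_{\infty}(\mathcal{H})$; it remains only to control its singular values. To this end I would invoke the standard min--max estimate
\[
s_j(AXB)\le \|A\|\,s_j(X)\,\|B\|, \qquad j=1,2,\ldots,
\]
which is a classical consequence of the variational characterisation of singular values (see \cite{GohKre1969}). Summing the first $n$ of these inequalities yields
\[
\sum_{j=1}^{n} s_j(AXB)\le \|A\|\,\|B\|\,\sum_{j=1}^{n} s_j(X), \qquad n\ge 1.
\]

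Second, dividing by $1+\ln(n)$ and taking $\sup_{n\ge 1}$ on both sides, definition \eqref{6-2.26} of the norm gives at once
\[
\|AXB\|_{1,\infty}\;\le\;\|A\|\,\|B\|\,\|X\|_{1,\infty}\,<\,+\infty,
\]
so $AXB\in \mathcal{C}_{1,\infty}(\mathcal{H})$. This simultaneously establishes both the algebraic two-sided ideal property and the symmetric-norm estimate \eqref{6-2.12-4} tailored to this particular norm. The remaining symmetric-norm axioms (positivity, homogeneity, triangle inequality) are inherited from the fact that $\phi_{\Pi}$ with $\Pi=\{j^{-1}\}_{j\ge 1}$ is a symmetric norming function and that $\|\cdot\|_{1,\infty}$ in \eqref{6-2.26} is equivalent to $\|\cdot\|_{\phi_\Pi}$ of \eqref{6-2.19}, since $\sum_{j=1}^{n} j^{-1}=\ln(n)+O(1)$.

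Third, combining the preceding with Proposition \ref{pro7-1.1}, which asserts that $(\mathcal{C}_{1,\infty}(\mathcal{H}),\|\cdot\|_{1,\infty})$ is complete, we conclude that it is a Banach ideal of $\mathcal{L}(\mathcal{H})$. The only genuinely non-routine ingredient is the singular-value inequality $s_j(AXB)\le\|A\|\,\|B\|\,s_j(X)$, but this is a well-established fact from the theory of compact operators and is precisely the structural input that makes the general symmetrically-normed ideal machinery of Section \ref{S0} applicable here; no further obstacle is expected.
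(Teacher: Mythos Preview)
Your proposal is correct and follows essentially the same route as the paper: both arguments rest on the singular-value inequality $s_j(AXB)\le\|A\|\,\|B\|\,s_j(X)$, sum it over $j\le n$, divide by $1+\ln(n)$, and take the supremum to obtain $\|AXB\|_{1,\infty}\le\|A\|\,\|B\|\,\|X\|_{1,\infty}$. Your write-up is somewhat more detailed (noting compactness of $AXB$, spelling out the role of Proposition~\ref{pro7-1.1} for completeness, and commenting on the remaining norm axioms), but the substantive content is identical to the paper's short proof.
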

\begin{proof} To this end it is sufficient to prove that if $A$ and $C$ are bounded operators, then
$B \in\mathcal{C}_{1,\infty}(\mathcal{H})$ implies $A B C \in \mathcal{C}_{1,\infty}(\mathcal{H})$.
Recall that singular values of the operator $A B C$ verify the estimate $s_j(A B C)\leq \|A\| \|C\| s_j(B)$.
By (\ref{6-2.26}) it yields
\begin{eqnarray}\label{7-1.1-0}
&& \|ABC\|_{1,\infty} = \sup_{n \in \mathbb{N}}\frac{1}{1 +\ln(n)}\sum^n_{j=1}s_j(A B C) \leq \\
&& \|A\| \|C\| \sup_{n \in \mathbb{N}}\frac{1}{1 +\ln(n)}\sum^n_{j=1}s_j(B) = \|A\| \|C\|  \|B\|_{1,\infty} \ ,
\nonumber
\end{eqnarray}
and consequently the proof of the assertion.
\hfill $\square$  \end{proof}

Recall that for any $A \in \mathcal{L}(\mathcal{H})$ and all $B \in\mathcal{C}_{1}(\mathcal{H})$ one can
define a linear functional on $\mathcal{C}_{1}(\mathcal{H})$ given by ${{\rm{Tr}}}_{\mathcal{H}} (A B)$.
The set of these functionals
$\{{{\rm{Tr}}}_{\mathcal{H}} (A  \cdot)\}_{A \in \mathcal{L}(\mathcal{H})}$
is just the \textit{dual} space $\mathcal{C}_{1}(\mathcal{H})^*$ of $\mathcal{C}_{1}(\mathcal{H})$ with the
operator-norm topology. In other words, $\mathcal{L}(\mathcal{H})=\mathcal{C}_{1}(\mathcal{H})^* $, in the
sense that the map $A \mapsto {{\rm{Tr}}}_{\mathcal{H}} (A  \cdot)$ is the isometric isomorphism of
$\mathcal{L}(\mathcal{H})$ onto $\mathcal{C}_{1}(\mathcal{H})^*$.

With help of the \textit{duality} relation
\begin{equation}\label{7-1.1}
\langle A| B \rangle : ={{\rm{Tr}}}_{\mathcal{H}} (A B) \ ,
\end{equation}
one can also describe the space $\mathcal{C}_{1}(\mathcal{H})_{*}$, which is a \textit{predual} of
$\mathcal{C}_{1}(\mathcal{H})$, i.e., its dual
$(\mathcal{C}_{1}(\mathcal{H})_{*})^* =\mathcal{C}_{1}(\mathcal{H})$.
To this aim for each \textit{fixed} $B \in\mathcal{C}_{1}(\mathcal{H})$ we consider
the functionals $A \mapsto {{\rm{Tr}}}_{\mathcal{H}} (A B)$ on $\mathcal{L}(\mathcal{H})$.
It is known that they are not \textit{all} continuous linear functional on bounded operators
$\mathcal{L}(\mathcal{H})$, i.e., $\mathcal{C}_{1}(\mathcal{H}) \subset \mathcal{L}(\mathcal{H})^*$,
but they yield the entire dual only of compact operators,
i.e., $\mathcal{C}_{1}(\mathcal{H})=\mathcal{C}_{\infty}(\mathcal{H})^* $. Hence,
$\mathcal{C}_{1}(\mathcal{H})_{*} = \mathcal{C}_{\infty}(\mathcal{H})$.

Now we note that under duality relation (\ref{7-1.1}) the Dixmier ideal
$\mathcal{C}_{1,\infty}(\mathcal{H})$ is the dual of the Macaev ideal:
$\mathcal{C}_{1,\infty}(\mathcal{H}) = \mathcal{C}_{\infty, 1}(\mathcal{H})^*$, where
\begin{equation}\label{7-1.2}
\mathcal{C}_{\infty, 1} (\mathcal{H})= \{A \in \mathcal{C}_{\infty}(\mathcal{H}):
\sum_{n\geq1} \frac{1}{n} \ s_{n}(A) < \infty \} \ ,
\end{equation}
see Example \ref{ex6-2.3}.
By the same duality relation and by similar calculations one also obtains that the \textit{predual} of
$\mathcal{C}_{\infty, 1}(\mathcal{H})$  is the ideal
$\mathcal{C}_{\infty, 1}(\mathcal{H})_* = \mathcal{C}_{1, \infty}^{(0)}(\mathcal{H})$, defined by
\begin{equation}\label{7-1.3}
\mathcal{C}_{1, \infty}^{(0)} (\mathcal{H}): = \{A \in \mathcal{C}_{\infty}(\mathcal{H}):
\sum_{j \geq1}^{n} \ s_{j}(A) = o(\ln (n)), \ n \rightarrow \infty\} \ .
\end{equation}
By virtue of  (\ref{6-2.25}) (see Remark \ref{rem6-2.3}) the ideal (\ref{7-1.3}) is not self-dual since
\begin{equation*}
\mathcal{C}_{1, \infty}^{(0)}(\mathcal{H})^{**} = \mathcal{C}_{1,\infty}(\mathcal{H})\supset
\mathcal{C}_{1, \infty}^{(0)}(\mathcal{H}).
\end{equation*}

The problem which has motivated construction of the Dixmier trace in \cite{Dix1966} was related to
the question of a general definition of the \textit{trace}, i.e. a linear, positive, and unitarily invariant
functional on a \textit{proper} Banach ideal $\mathfrak{I}(\mathcal{H})$ of the unital algebra of bounded
operators $\mathcal{L}(\mathcal{H})$.
Since any {proper} two-sided ideal $\mathfrak{I}(\mathcal{H})$ of $\mathcal{L}(\mathcal{H})$ is contained
in compact operators $\mathcal{C}_{\infty}(\mathcal{H})$ and contains the set $\mathcal{K}(\mathcal{H})$ of
finite-rank operators ((\ref{6-2.12-5-1}), Section \ref{S0}), \textit{domain} of definition of the {trace} has
to coincide with the ideal $\mathfrak{I}(\mathcal{H})$.

\begin{remark}\label{rem6-2.4}
The \textit{canonical} trace ${\rm{Tr}}_{\mathcal{H}}(\cdot)$ is nontrivial only on domain, which is the
trace-class ideal $\mathcal{C}_{1}(\mathcal{H})$, see Example \ref{ex6-2.1}. We recall that it is
characterised by the property of \textit{normality}:
${\rm{Tr}}_{\mathcal{H}}(\sup_{\alpha} B_{\alpha}) = \sup_{\alpha}{\rm{Tr}}_{\mathcal{H}}(B_{\alpha})$, for
every directed increasing bounded family $\{B_{\alpha}\}_{\alpha \in \Delta}$ of positive operators from
$\mathcal{C}_{1, +}(\mathcal{H})$.

Note that every nontrivial \textit{normal} trace on $\mathcal{L}(\mathcal{H})$
is proportional to the canonical trace ${\rm{Tr}}_{\mathcal{H}}(\cdot)$, see e.g. \cite{Dix1981},
\cite{Piet2014}. Therefore, the Dixmier trace (\ref{6-2.26-1}) :
$\mathcal{C}_{1, \infty} \ni X \mapsto {\rm{Tr}} {_{\omega}}(X)$, is \textit{not} normal.
\end{remark}
\begin{definition}\label{def7-1.1}
A \textit{trace} on the proper Banach ideal $\mathfrak{I}(\mathcal{H})\subset \mathcal{L}(\mathcal{H})$ is called
\textit{singular} if it vanishes on the set $\mathcal{K}(\mathcal{H})$.
\end{definition}

Since a singular trace is defined up to trace-class operators $\mathcal{C}_{1}(\mathcal{H})$,
then by Remark \ref{rem6-2.4} it is obviously \textit{not} normal.

\section{Dixmier trace}\label{S2}

Recall that only the ideal of trace-class operators has the property that
on its {positive cone} $\mathcal{C}_{1,+}(\mathcal{H}):= \{A \in \mathcal{C}_{1}(\mathcal{H}):
A \geq 0\}$ the trace-norm is \textit{linear} since
$\|A + B\|_{1} = {\rm{Tr}} \, (A + B) = {\rm{Tr}} \, (A) + {\rm{Tr}} \, (B)=\|A\|_{1} + \|B\|_{1}$ for
$A,B \in \mathcal{C}_{1,+}(\mathcal{H})$, see Example \ref{ex6-2.1}.
Then the uniqueness of the trace-norm allows to extend the trace to the whole linear space
$\mathcal{C}_{1}(\mathcal{H})$. Imitation of this idea \textit{fails} for other symmetrically-normed ideals.

This problem motivates the Dixmier trace construction as a certain limiting procedure involving the
$\|\cdot\|_{1,\infty}$-norm.
Let $\mathcal{C}_{1,\infty,+}(\mathcal{H})$ be a {positive cone} of the Dixmier ideal. One
can try to construct on $\mathcal{C}_{1,\infty,+}(\mathcal{H})$ a
\textit{linear}, \textit{positive}, and \textit{unitarily} invariant functional (called \textit{trace}
$\mathcal{T}$)
via \textit{extension} of the limit (called Lim) of the sequence of properly normalised finite sums of the
operator $X$ singular values:
\begin{equation}\label{7-2.2}
\mathcal{T}(X) := {\rm{Lim}}_{n \rightarrow \infty} \ \frac{1}{1 +\ln(n)}\sum^n_{j=1} \ s_j(X) \ ,
\ X \in \mathcal{C}_{1,\infty,+}(\mathcal{H}) \ .
\end{equation}

First we note that since for any unitary $U: \mathcal{H} \rightarrow U$, the singular values
of $X \in \mathcal{C}_{\infty}(\mathcal{H})$ are invariant: $s_j(X) = s_j(U \, X \, U^*)$,
it is also true for the sequence
\begin{equation}\label{7-2.3}
\sigma_{n}(X) := \sum^n_{j=1} \ s_j(X) \ , \ n \in \mathbb{N} \ .
\end{equation}
Then the Lim in (\ref{7-2.2}) (if it exists) inherits the property of \textit{unitarity}.

Now we comment that positivity: $X \geq 0$, implies the positivity of eigenvalues
$\{\lambda_{j}(X)\}_{j\geq 1}$ and consequently: $\lambda_{j}(X)= s_{j}(X)$. Therefore,
$\sigma_{n}(X) \geq 0$ and the Lim in (\ref{7-2.2}) is a \textit{positive} mapping.

The next problem with the formula for $\mathcal{T}(X)$ is its \textit{linearity}. To proceed we recall that if
$P: \mathcal{H} \rightarrow P(\mathcal{H})$ is an orthogonal projection on a finite-dimensional
subspace with $\dim P(\mathcal{H}) = n$, then for any bounded operator $X \geq 0$ the (\ref{7-2.3}) gives
\begin{equation}\label{7-2.4}
\sigma_{n}(X) = \sup_{P}\ \{{\rm{Tr}} {_{\mathcal{H}}} \, (X  P): \dim P(\mathcal{H}) = n\}  \ .
\end{equation}
As a corollary of (\ref{7-2.4}) one obtains the Horn-Ky Fan inequality
\begin{equation}\label{7-2.5}
\sigma_{n}(X + Y) \leq \sigma_{n}(X) + \sigma_{n}(Y) \ , \ n \in \mathbb{N} ,
\end{equation}
valid in particular for any pair of bounded \textit{positive} compact operators $X$ and $Y$.
For $\dim P (\mathcal{H}) \leq 2 n$ one similarly gets from (\ref{7-2.4}) that
\begin{equation}\label{7-2.7}
\sigma_{2n}(X + Y) \geq \sigma_{n}(X) + \sigma_{n}(Y) \ , \ n \in \mathbb{N} \ .
\end{equation}

Motivated by (\ref{7-2.2}) we now introduce
\begin{equation}\label{7-2.8}
\mathcal{T}_{n}(X) := \frac{1}{1 +\ln(n)}\sigma_{n}(X) \ ,
\ X \in \mathcal{C}_{1,\infty,+}(\mathcal{H}) \ ,
\end{equation}
and denote  by ${\rm{Lim}}\{\mathcal{T}_{n}(X)\}_{n\in \mathbb{N}} :=
{\rm{Lim}}_{n \rightarrow \infty} \mathcal{T}_{n}(X)$
the right-hand side of the functional in (\ref{7-2.2}).
Note that by (\ref{7-2.8}) the inequalities (\ref{7-2.5}) and (\ref{7-2.7}) yield for $n \in \mathbb{N}$
\begin{eqnarray}
\mathcal{T}_{n}(X + Y) \leq \mathcal{T}_{n}(X) + \mathcal{T}_{n}(Y) \ \  , \ \
\frac{1 +\ln(2n)}{1 +\ln(n)} \ \mathcal{T}_{2n}(X + Y)\geq \mathcal{T}_{n}(X) + \mathcal{T}_{n}(Y) \ .
\label{7-2.10}
\end{eqnarray}
Since the functional Lim includes the limit $n \rightarrow \infty$, the inequalities (\ref{7-2.10})
\textit{would} give a desired linearity of the {trace} $\mathcal{T}$:
\begin{equation}\label{7-2.11}
\mathcal{T}(X + Y) = \mathcal{T}(X) + \mathcal{T}(Y)  \ ,
\end{equation}
\textit{if} one proves that the Lim$_{n \rightarrow \infty}$ in (\ref{7-2.2}) { exists} and {finite}
for $X,Y$ as well as for $X+Y$.

To this end we note that if the right-hand of (\ref{7-2.2}) exists, then one obtains (\ref{7-2.11}).
Hence the ${\rm{Lim}}\{\mathcal{T}_{n}(X)\}_{n\in \mathbb{N}}$ is a positive linear map
${\rm{Lim}}: l^{\infty}(\mathbb{N}) \rightarrow \mathbb{R}$, which defines a \textit{state}
$\omega \in \mathcal{S}(l^{\infty}(\mathbb{N}))$ on the Banach space of sequences
$\{\mathcal{T}_{n}(\cdot)\}_{n\in \mathbb{N}} \in l^{\infty}(\mathbb{N})$, such that 
$\mathcal{T}(X) = \omega (\{\mathcal{T}_{n}(X)\}_{n\in \mathbb{N}})$.
\begin{remark}\label{rem6-3.0}
Scrutinising description of $\omega(\cdot)$, we infer that its values 
${\rm{Lim}}\{\mathcal{T}_{n}(X)\}_{n\in \mathbb{N}}$ are completely determined only by the 
"tail" behaviour of the sequences $\{\mathcal{T}_{n}(X)\}_{n\in \mathbb{N}}$ as it is 
defined by ${\rm{Lim}}_{n \rightarrow \infty} \mathcal{T}_{n}(X)$. 
For example, one concludes  that the state $\omega (\{\mathcal{T}_{n}(X)\}_{n\in \mathbb{N}}) = 0$ for 
the whole set $c_0$ of sequences: $\{\mathcal{T}_{n}(X)\}_{n\in \mathbb{N}} \in c_0$, which tend to 
zero. The same is also plausible for the non-zero converging limits.     
\end{remark}

To make this description more precise we impose on the state $\omega$ the following conditions:
\begin{eqnarray*}
&&(a) \ \ \omega (\eta)\geq 0  \ , \ {\rm{for}} \  \ \forall \eta = \{\eta_n \geq 0\}_{n\in \mathbb{N}} \ , \\
&&(b) \ \ \omega (\eta) = {\rm{Lim}}\{\eta_n \}_{n\in \mathbb{N}}  =
\lim_{n \rightarrow \infty} \eta_n  \ , \ {\rm{if}} \
\{\eta_n \geq 0\}_{n\in \mathbb{N}}
\ {\rm{is \ convergent}} \ .
\end{eqnarray*}
By virtue of (a) and (b) the definitions (\ref{7-2.2}) and (\ref{7-2.8}) imply that for
$X,Y \in \mathcal{C}_{1,\infty,+}(\mathcal{H})$ one gets
\begin{eqnarray}\label{7-2.12}
&& \mathcal{T}(X) = \omega (\{ \mathcal{T}_{n}(X)\}_{n\in \mathbb{N}}) =
\lim_{n \rightarrow \infty} \mathcal{T}_{n}(X) \ ,
\\
&&\mathcal{T}(Y) = \omega (\{ \mathcal{T}_{n}(Y)\}_{n\in \mathbb{N}}) =
\lim_{n \rightarrow \infty} \mathcal{T}_{n}(Y) \ ,  \label{7-2.13} \\
&&\mathcal{T}(X + Y) = \omega (\{ \mathcal{T}_{n}(X+Y)\}_{n\in \mathbb{N}}) =
\lim_{n \rightarrow \infty} \mathcal{T}_{n}(X+Y)   \ , \label{7-2.13-1}
\end{eqnarray}
if the limits in the right-hand sides of (\ref{7-2.12})-(\ref{7-2.13-1}) exist.

Now, to ensure (\ref{7-2.11}) one has to select $\omega$ in such a way that it allows to restore the
equality in (\ref{7-2.10}), when $n \rightarrow \infty$. To this aim we impose on the state $\omega$ the
condition of \textit{dilation} $\mathfrak{D}_2$-{invariance}.

Let $\mathfrak{D}_2 : l^{\infty}(\mathbb{N}) \rightarrow l^{\infty}(\mathbb{N})$, be {dilation} mapping
$\eta \mapsto \mathfrak{D}_2(\eta)$:
\begin{equation}\label{7-2.14}
\mathfrak{D}_2: (\eta_1, \eta_2, \ldots \eta_k, \ldots) \rightarrow
(\eta_1, \eta_1, \eta_2, \eta_2, \ldots \eta_k,\eta_k, \ldots)\ , \  \forall \eta \in l^{\infty}(\mathbb{N}) \ .
\end{equation}
We say that $\omega$ is dilation $\mathfrak{D}_2$-\textit{invariant} if for any $\eta \in l^{\infty}(\mathbb{N})$
it verifies the property
\begin{equation}\label{7-2.15}
(c) \hspace{1cm}   \omega(\eta) = \omega(\mathfrak{D}_2(\eta)) \ .  \hspace{3cm}
\end{equation}

We shall discuss the question of \textit{existence} the dilation $\mathfrak{D}_2$-invariant states
(the \textit{invariant means}) on the Banach space $l^{\infty}(\mathbb{N})$ in Remark \ref{rem6-3.1}.

Let $X,Y \in \mathcal{C}_{1,\infty,+}(\mathcal{H})$. Then applying the property (c) to the sequence
$\eta = \{\xi_{2n}:=\mathcal{T}_{2n}(X+Y)\}_{n=1}^{\infty}$, we obtain
\begin{equation}\label{7-2.16}
\omega(\eta) = \omega(\mathfrak{D}_2(\eta)) =
\omega(\xi_2, \xi_2, \xi_4, \xi_4,  \xi_6, \xi_6, \ldots)\ .
\end{equation}
Note that if $\xi = \{\xi_{n} =\mathcal{T}_{n}(X+Y)\}_{n=1}^{\infty}$, then the difference of the sequences:
\begin{equation*}\label{7-2.16-1}
\mathfrak{D}_2(\eta) - \xi = (\xi_2, \xi_2, \xi_4, \xi_4,  \xi_6, \xi_6, \ldots) -
(\xi_1, \xi_2, \xi_3, \xi_4,  \xi_5, \xi_6, \ldots) \ ,
\end{equation*}
converges to \textit{zero} if $\xi_{2n} - \xi_{2n-1} \rightarrow 0$ as $n\rightarrow \infty$.
Then by virtue of (\ref{7-2.13-1}) and (\ref{7-2.16}) this would imply
\begin{equation*}\label{7-2.17}
\omega (\{ \mathcal{T}_{2n}(X+Y)\}_{n\in \mathbb{N}})=
\omega (\mathfrak{D}_2(\{ \mathcal{T}_{2n}(X+Y)\}_{n\in \mathbb{N}}))
= \omega (\{ \mathcal{T}_{n}(X+Y)\}_{n\in \mathbb{N}})\ ,
\end{equation*}
or by (\ref{7-2.13-1}):
$\lim_{n \rightarrow \infty}\mathcal{T}_{2n}(X+Y) = \lim_{n \rightarrow \infty}\mathcal{T}_{n}(X+Y)$,
which by estimates (\ref{7-2.10}) would also yield
\begin{equation}\label{7-2.17-1}
\lim_{n \rightarrow \infty}\mathcal{T}_{n}(X+Y) =
\lim_{n \rightarrow \infty}\mathcal{T}_{n}(X) + \lim_{n \rightarrow \infty}\mathcal{T}_{n}(Y) \ .
\end{equation}

Now, summarising  (\ref{7-2.12}), (\ref{7-2.13}), (\ref{7-2.13-1}) and (\ref{7-2.17-1}) we obtain the linearity
(\ref{7-2.11}) of the limiting functional $\mathcal{T}$ on the positive cone
$\mathcal{C}_{1,\infty,+}(\mathcal{H})$ if it is defined by the corresponding $\mathfrak{D}_2$-{invariant}
state $\omega$, or a dilation-invariant mean.

Therefore, to finish the proof of linearity it rests only to check that
$\lim_{n\rightarrow\infty} (\xi_{2n} - \xi_{2n-1}) = 0$. To this end we note that by definitions
(\ref{7-2.3}) and (\ref{7-2.8}) one gets
\begin{eqnarray}
\xi_{2n} - \xi_{2n-1} &=& \left[\frac{1}{\ln(2n)} - \frac{1}{\ln(2n-1)}\right]\sigma_{2n-1}(X+Y) \nonumber \\
&+& \frac{1}{\ln(2n)} s_{2n}(X+Y) \ . \label{7-2.18}
\end{eqnarray}
Since $X,Y \in \mathcal{C}_{1,\infty,+}(\mathcal{H})$, we obtain that $\lim_{n\rightarrow \infty }s_{2n}(X+Y) = 0$
and that $\sigma_{2n-1}(X+Y) = O(\ln(2n-1))$. Then taking into account that
$({1}/{\ln(2n)} - {1}/{\ln(2n-1)}) = o ({1}/{\ln(2n-1)})$ one gets that for $n\rightarrow \infty$
the right-hand side of (\ref{7-2.18}) converges to zero.

To conclude our construction of the trace $\mathcal{T}(\cdot)$ we note that by linearity (\ref{7-2.11})
one can uniquely extend this functional from the positive cone $\mathcal{C}_{1,\infty,+}(\mathcal{H})$ to the
real subspace of the Banach space $\mathcal{C}_{1,\infty}(\mathcal{H})$, and finally to the entire ideal
$\mathcal{C}_{1,\infty}(\mathcal{H})$.
\begin{definition}\label{def7-2.1}
The \textit{Dixmier trace} ${\rm{Tr}}_{\omega}(X)$ of the operator $X\in \mathcal{C}_{1,\infty,+}(\mathcal{H})$
is the value of the linear functional (\ref{7-2.2}):
\begin{equation}\label{7-2.19}
{{\rm{Tr}}}_{\omega} (X): = {\rm{Lim}}_{n \rightarrow \infty} \ \frac{\sigma_{n}(X)}{1 +\ln(n)} =
\omega (\{\mathcal{T}_{n}(X)\}_{n\in \mathbb{N}})   \ ,
\end{equation}
where ${\rm{Lim}}_{n \rightarrow \infty}$ is defined by a dilation-invariant state
$\omega \in \mathcal{S}(l^{\infty}(\mathbb{N}))$ on $l^{\infty}(\mathbb{N})$, that satisfies the properties
(a), (b), and (c). Since any self-adjoint operator $X\in \mathcal{C}_{1,\infty}(\mathcal{H})$ has the
representation: $X = X_+ - X_-$, where $X_{\pm} \in \mathcal{C}_{1,\infty,+}(\mathcal{H})$, one gets
${{\rm{Tr}}}_{\omega}  (X) = {{\rm{Tr}}}_{\omega}  (X_+) - {{\rm{Tr}}}_{\omega}  (X_-)$. Then
for arbitrary $Z\in \mathcal{C}_{1,\infty}(\mathcal{H})$ the Dixmier trace is
${\rm{Tr}}_{\omega}(Z) = {\rm{Tr}}_{\omega} ({\rm{Re}}Z) + i {\rm{Tr}}_{\omega}({\rm{Im}}Z)$.
\end{definition}

Note that if $X\in \mathcal{C}_{1,\infty,+}(\mathcal{H})$, then definition (\ref{7-2.19})
of ${\rm{Tr}}_{\omega}(\cdot)$ together with definition of the norm $\|\cdot\|_{1,\infty}$ in (\ref{6-2.26}),
readily imply the estimate ${{\rm{Tr}}}_{\omega}(X) \leq \|X \|_{1,\infty}$, which in turn yields the inequality
for arbitrary $Z$ from the Dixmier ideal $\mathcal{C}_{1,\infty}(\mathcal{H})$:
\begin{equation}\label{7-2.19-1}
|{{\rm{Tr}}}_{\omega}(Z)| \leq \|Z \|_{1,\infty} \ .
\end{equation}
\begin{remark}\label{rem6-3.1} A decisive for construction of the Dixmier trace ${\rm{Tr}}_{\omega}(\cdot)$
is the \textit{existence} of the invariant mean
$\omega \in \mathcal{S}(l^{\infty}(\mathbb{N})) \subset (l^{\infty}(\mathbb{N}))^*$.
Here the space $(l^{\infty}(\mathbb{N}))^*$ is \textit{dual} to the Banach space of bounded sequences.
Then by the Banach-Alaoglu theorem the convex set of states
$\mathcal{S}(l^{\infty}(\mathbb{N}))$ is \textit{compact} in $(l^{\infty}(\mathbb{N}))^*$ in the weak*-topology.
Now, for any $\phi \in \mathcal{S}(l^{\infty}(\mathbb{N}))$ the relation
$\phi(\mathfrak{D}_2(\cdot)) =: (\mathfrak{D}_{2}^* \phi)(\cdot)$ defines the dual
$\mathfrak{D}_{2}^*$-dilation on the set of states. By definition (\ref{7-2.14}) this map is such that
$\mathfrak{D}_{2}^*: \mathcal{S}(l^{\infty}(\mathbb{N})) \rightarrow \mathcal{S}(l^{\infty}(\mathbb{N}))$,
as well as continuous and affine (in fact linear). Then by the Markov-Kakutani theorem the dilation
$\mathfrak{D}_{2}^*$  has a fix point
$\omega \in \mathcal{S}(l^{\infty}(\mathbb{N})): \mathfrak{D}_{2}^* \omega = \omega$.
This observation justifies the existence of the \textit{invariant mean} (c) for $\mathfrak{D}_{2}$-dilation.
\end{remark}

Note that Remark \ref{rem6-3.1} has a straightforward extension to any $\mathfrak{D}_{k}$-dilation
for $k>2$, which is defined similar to (\ref{7-2.14}). Since dilations for different $k \geq 2$ \textit{commute},
the extension of the Markov-Kakutani theorem yields that the commutative family
$\mathcal{F} = \{\mathfrak{D}_{k}^* \}_{k\geq 2}$ has in $\mathcal{S}(l^{\infty}(\mathbb{N}))$ the
common fix point $\omega = \mathfrak{D}_{2}^* \omega$. Therefore, Definition \ref{def7-2.1} of the
Dixmier trace does not depend on the degree $k\geq 2$ of dilation $\mathfrak{D}_{k}$.

For more details about different constructions of \textit{invariant means} and the corresponding Dixmier trace
on $\mathcal{C}_{1,\infty}(\mathcal{H})$, see, e.g., \cite{CarSuk2006}, \cite{LoSuZa2013}.
\begin{prop}\label{pro7-2.1}
The Dixmier trace has the following properties:\\
\emph{(a)} For any bounded operator $B\in \mathcal{L}(\mathcal{H})$ and
$Z\in \mathcal{C}_{1,\infty}(\mathcal{H})$ one has
${{\rm{Tr}}}_{\omega}(Z B) = {{\rm{Tr}}}_{\omega}(B Z)$. \\
\emph{(b)} ${{\rm{Tr}}}_{\omega}(C) = 0$ for any operator $C\in \mathcal{C}_{1}(\mathcal{H})$
from the trace-class ideal, which is the closure of finite-rank operators $\mathcal{K}(\mathcal{H})$
for the $\|\cdot\|_{1}$-norm. \\
\emph{(c)} The Dixmier trace ${{\rm{Tr}}}_{\omega}:
\mathcal{C}_{1,\infty}(\mathcal{H}) \rightarrow \mathbb{C}$, is continuous in the
$\|\cdot\|_{1,\infty}$-norm.
\end{prop}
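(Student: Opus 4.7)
The plan is to handle the three items essentially independently, since each reduces to something already developed in Sections \ref{S1}--\ref{S2}, and items (b) and (c) are nearly immediate consequences of the construction and of estimate (\ref{7-2.19-1}).

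For part (a), the key step is to reduce the cyclicity assertion $\mathrm{Tr}_\omega(ZB)=\mathrm{Tr}_\omega(BZ)$ to the case in which $B$ is unitary. Any $B\in\mathcal{L}(\mathcal{H})$ can be written as a finite linear combination of at most four unitary operators (Cartesian decomposition, then each self-adjoint contraction $A$ expressed as $A=\tfrac12(U+U^\ast)$ with $U=A+i\sqrt{\mathbf{1}-A^2}$). By linearity of $\mathrm{Tr}_\omega$, already established in Definition \ref{def7-2.1}, it therefore suffices to treat the case $B=U$ with $U$ unitary. Here the point is the algebraic identity $UZ = U(ZU)U^{-1}$, which shows that $UZ$ and $ZU$ are unitarily equivalent. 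Since singular values are invariant under unitary conjugation, $s_j(UZ)=s_j(ZU)$ for all $j$, hence $\mathcal{T}_n(UZ)=\mathcal{T}_n(ZU)$ for every $n$, and consequently $\mathrm{Tr}_\omega(UZ)=\mathrm{Tr}_\omega(ZU)$ by (\ref{7-2.19}).

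For part (b), let $C\in\mathcal{C}_1(\mathcal{H})$. Then $\sigma_n(C)=\sum_{j=1}^n s_j(C)\le\|C\|_1<\infty$ for every $n$, so
\begin{equation*}
\mathcal{T}_n(C)=\frac{\sigma_n(C)}{1+\ln(n)}\le\frac{\|C\|_1}{1+\ln(n)}\xrightarrow[n\to\infty]{}0.
\end{equation*}
By property (b) imposed on the state $\omega$, the value of $\omega$ on a convergent sequence coincides with its ordinary limit, and therefore $\mathrm{Tr}_\omega(C)=\omega(\{\mathcal{T}_n(C)\}_{n\in\mathbb{N}})=0$. The density statement about $\mathcal{K}(\mathcal{H})$ in $\mathcal{C}_1(\mathcal{H})$ is not needed for the computation itself; it simply identifies the ideal on which the Dixmier trace is known to vanish.

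For part (c), the estimate (\ref{7-2.19-1}), namely $|\mathrm{Tr}_\omega(Z)|\le\|Z\|_{1,\infty}$, combined with the linearity of $\mathrm{Tr}_\omega$ established in Section \ref{S2}, gives
\begin{equation*}
|\mathrm{Tr}_\omega(Z_1)-\mathrm{Tr}_\omega(Z_2)|=|\mathrm{Tr}_\omega(Z_1-Z_2)|\le\|Z_1-Z_2\|_{1,\infty},
\end{equation*}
so $\mathrm{Tr}_\omega$ is Lipschitz continuous on $\mathcal{C}_{1,\infty}(\mathcal{H})$ in the $\|\cdot\|_{1,\infty}$-topology. The only mild obstacle in the whole proof is the unitary-decomposition step in (a): one must verify that the four unitary summands can indeed be chosen inside $\mathcal{L}(\mathcal{H})$, but this is standard, and all the remaining work is already encoded in the construction of $\omega$ and in inequality (\ref{7-2.19-1}).
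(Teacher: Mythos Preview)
Your proof is correct and follows essentially the same route as the paper's: for (a) both reduce to unitary $B=U$ via the four-unitary decomposition and then invoke invariance of singular values under unitary conjugation; for (b) both use $\sigma_n(C)\le\|C\|_1$ so that $\mathcal{T}_n(C)\to 0$; for (c) both appeal directly to the bound $|\mathrm{Tr}_\omega(Z)|\le\|Z\|_{1,\infty}$ from (\ref{7-2.19-1}). One small technical point, glossed over in both arguments, is that formula (\ref{7-2.19}) is stated only for positive operators, whereas $UZ$ and $ZU$ need not be positive; the clean way to close this is to note that $UZ=U(ZU)U^\ast$ forces $(\mathrm{Re}\,UZ)_\pm=U(\mathrm{Re}\,ZU)_\pm U^\ast$ and likewise for the imaginary parts, so that (\ref{7-2.19}) applies to each of the four positive pieces in Definition~\ref{def7-2.1}.
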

\begin{proof} (a) Since every operator $B\in \mathcal{L}(\mathcal{H})$ is a linear combination of four
unitary operators, it is sufficient to prove the equality ${{\rm{Tr}}}_{\omega}(Z U) = {{\rm{Tr}}}_{\omega}(U Z)$
for a unitary operator $U$ and moreover only for $Z\in \mathcal{C}_{1,\infty,+}(\mathcal{H})$. Then the
corresponding equality follows from the unitary invariance: $s_{j}(Z) = s_{j}(Z U)= s_{j}(U Z) = s_{j}(U Z U^*)$,
of singular values of the positive operator $Z$ for all $j \geq 1$.\\
(b) Since $C\in \mathcal{C}_{1}(\mathcal{H})$ yields $\|C\|_{1} < \infty$, definition (\ref{7-2.3}) implies
$\sigma_{n}(C) \leq \|C\|_{1}$ for any $n \geq 1$. Then by Definition \ref{def7-2.1} one gets
${{\rm{Tr}}}_{\omega}(C) = 0$. Proof of the last part of the statement is standard.\\
(c) Since the ideal $\mathcal{C}_{1,\infty}(\mathcal{H})$ is a Banach space and
${\rm{Tr}}_{\omega}: \mathcal{C}_{1,\infty}(\mathcal{H}) \rightarrow \mathbb{C}$ a linear functional it is
sufficient to consider continuity at $X =0$. Then let the sequence
$\{X_k\}_{k \geq 1} \subset \mathcal{C}_{1,\infty}(\mathcal{H})$
converges to $X =0$ in $\|\cdot\|_{1,\infty}$-topology, i.e. by (\ref{6-2.26})
\begin{equation}\label{7-2.20}
\lim_{k \rightarrow \infty} \|X_k\|_{1,\infty} =
\lim_{k \rightarrow \infty} \ \sup_{n \in \mathbb{N}} \, \frac{1}{1 +\ln(n)}\sigma_{n}(X_k) = 0 \ .
\end{equation}
Since (\ref{7-2.19-1}) implies $|{{\rm{Tr}}}_{\omega}(X_k)| \leq \|X_k\|_{1,\infty} \ $,
the assertion follows from (\ref{7-2.20}).
\hfill $\square$
\end{proof}

Therefore, the Dixmier construction gives an example of a \textit{singular} trace in the sense of Definition
\ref{def7-1.1}.

\section{Trotter-Kato product formulae in the Dixmier ideal}\label{S3}

Let $A\ge 0$ and $B\ge 0$ be two non-negative self-adjoint operators in a separable
Hilbert space $\mathcal{H}$ and let the subspace $\mathcal{H}_0 := \overline{\dom(A^{1/2}) \cap \dom(B^{1/2})}$.
It may happen that $\dom(A) \cap \dom(B)= \{0\}$, but the form-sum of these operators: $H = A \stackrel{.}{+} B$,
is well-defined in the subspace $\mathcal{H}_0 \subseteq \mathcal{H}$.

T. Kato proved in \cite{Kat1978} that under these conditions the \textit{Trotter product formula}
\begin{equation}
s-\lim_{n\to\infty}\left(e^{-tA/n}e^{-tB/n}\right)^n = e^{-tH}P_0,
\qquad t > 0,
\label{6-1.2-1}
\end{equation}
converges in the \textit{strong} operator topology \textit{away from zero} (i.e., for $t \in \mathbb{R}^{+}$),  
and \textit{locally uniformly} in $t \in \mathbb{R}^{+}$ (i.e. uniformly in $t \in[\varepsilon,T]$, 
for $0 < \varepsilon < T < +\infty \ $), to a 
\textit{degenerate} semigroup $\{e^{-tH}P_0\}_{t > 0}$. Here $P_0$ denotes the orthogonal projection 
from $\mathcal{H}$ onto $\mathcal{H}_0$. 

Moreover, in \cite{Kat1978} it was also shown that the product 
formula is true not only for the \textit{exponential} function $e^{-x}$, $x \ge 0$, but for a whole 
class of Borel measurable functions $f(\cdot)$ and $g(\cdot)$, which are defined on 
$\mathbb{R}^{+}_{0}:=[0,\infty)$ and satisfy the conditions:
\begin{eqnarray}
& & 0 \le f(x) \le 1, \qquad f(0) = 1, \qquad f'(+0) = -1, \label{6-1.3}\\
& & 0 \le g(x) \le 1, \qquad g(0) = 1, \qquad g'(+0) = -1. \label{6-1.4}
\end{eqnarray}
Namely, the main result of \cite{Kat1978} says that besides (\ref{6-1.2-1}) one also gets convergence
\begin{equation}\label{6-1.6}
\tau -\lim_{n\to\infty}\left(f(tA/n)g(tB/n)\right)^n = e^{-tH}P_0, \qquad t > 0,
\end{equation}
locally uniformly away from zero, if topology $\tau = s$.

Product formulae of the type (\ref{6-1.6}) are called the {\em Trotter-Kato product formulae} for functions
(\ref{6-1.3}), (\ref{6-1.4}), which are called the \textit{Kato functions} $\mathcal{K}$. Note that $\mathcal{K}$
is closed with respect to the \textit{products} of Kato functions.

For some particular classes of the {Kato functions} we refer to \cite{NeiZag1998}, \cite{Zag2003}.
In the following it is useful to consider instead of $f(x)g(x)$ two Kato functions: $g(x/2) f(x) g(x/2)$ and
$f(x/2) g(x) f(x/2)$, that produce the self-adjoint operator families
\begin{equation}\label{6-1.16-1}
F(t) := g(tB/2)f(tA)g(tB/2) \   {\rm{and}} \
T(t) := f(tA/2)g(tB)f(tA/2),  \ \  t \ge 0.
\end{equation}
%
%
%

Since \cite{NeiZag1990} it is known, that the \textit{lifting} of the topology of convergence in (\ref{6-1.6})
to the \textit{operator norm} $\tau = \|\cdot\|$ needs more conditions on operators
$A$ and $B$ as well as on the key Kato functions $f,g \in \mathcal{K}$. One finds a discussion and
more references on this subject in \cite{Zag2003}. Here we quote a result that will be
used below for the Trotter-Kato product formulae in the Dixmier ideal $\mathcal{C}_{1,\infty}(\mathcal{H})$.

Consider the class $\mathcal{K}_{\beta}$ of Kato-functions, which is defined in \cite{IchTam2001},
\cite{IchTamTamZag2001} as: \\
(i) Measurable functions $0 \leq h \leq 1 $ on $\mathbb{R}^{+}_{0}$, such that $h(0) = 1$, and $h'(+0) = -1$.\\
(ii) For $\varepsilon > 0$ there exists $\delta = \delta(\varepsilon) < 1$, such that
$h(s) \leq 1 - \delta(\varepsilon)$ for $s \geq \varepsilon$, and
\begin{equation*}
[h]_{\beta} := \sup_{s>0} \frac{\left|h(s) -1+s\right|}{s^{\beta}}
< \infty \ , \ \ {\rm{for}} \ \  1 < \beta \leq 2 \ .
\end{equation*}
The standard examples are: $h(s) = e^{-s}$ and $h(s)= (1 + a^{-1}s)^{-a}\ , \ a>0$.

Below we consider the class $\mathcal{K}_{\beta}$ and a particular case of generators $A$ and $B$, such
that for the Trotter-Kato product formulae the estimate of the convergence rate is \textit{optimal}.
\begin{prop}\label{IV.6-6}{\rm{\cite{IchTamTamZag2001}}}
Let $f,g \in {\mathcal{K}_{\beta}}$ with $\beta = 2$, and let $A$, $B$ be
non-negative self-adjoint operators in $\mathcal{H}$ such that the operator sum
$C := A + B $ is \textit{self-adjoint} on domain $dom(C):=dom(A)\cap dom(B)$. Then the
Trotter-Kato product formulae converge for $n\to \infty$ in the operator norm:
\begin{eqnarray*}
&& \|[f(tA/n)g(tB/n)]^n - e^{-tC}\| = O(n^{-1}) \ , \ \|[g(tB/n)f(tA/n)]^n - e^{-tC}\| = O(n^{-1})
  \ , \\
&& \|F(t/n)^n - e^{-tC}\| = O(n^{-1}) \ \ , \ \ \ \ \|T(t/n)^n - e^{-tC}\| = O(n^{-1}) \ .
\end{eqnarray*}
Note that for the corresponding to each formula {error bounds}
$O(n^{-1})$ are equal up to coefficients $\{\Gamma_j > 0\}_{j=1}^4$ and that each rate of convergence
$\Gamma_j \ \varepsilon(n)= O(n^{-1})$, $j = 1, \ldots 4$, is optimal.
\end{prop}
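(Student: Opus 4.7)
The plan is to prove the upper bound $\|F(t/n)^n - e^{-tC}\| = O(n^{-1})$ for $F(s) := g(sB/2)\,f(sA)\,g(sB/2)$; the other three formulas then follow by analogous arguments, absorbing at most one additional boundary factor via the contractivity $0 \leq f,g \leq 1$ (so $\|f(tA/n)\|,\|g(tB/n)\| \leq 1$). The reduction rests on the standard telescoping identity
\begin{equation*}
F(s)^{n} - e^{-tC} \;=\; \sum_{k=0}^{n-1} F(s)^{\,n-1-k}\bigl(F(s) - e^{-sC}\bigr)\,e^{-ksC},\qquad s = t/n,
\end{equation*}
together with $\|F(s)\| \leq 1$ and $\|e^{-ksC}\| \leq 1$. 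The task is thereby reduced to the single one-step bound
\begin{equation*}
\bigl\|F(s) - e^{-sC}\bigr\| \;\leq\; c\,s^{2}, \qquad 0 < s \leq s_{0},
\end{equation*}
after which summing $n$ terms of size $c(t/n)^2$ yields the claimed $O(n^{-1})$ rate, uniformly for $t$ in a compact subinterval of $(0,\infty)$.

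Establishing the one-step estimate is the technical core. Heuristically, $[f]_2, [g]_2 < \infty$ suggests $f(sA) = I - sA + R_f(s)$ with $\|R_f(s)x\| \leq [f]_2\,s^2\,\|A^2 x\|$ and similarly for $g$, so that formally $F(s) = I - sC + O(s^2)$, to be compared with $e^{-sC} = I - sC + O(s^2 C^2)$. The genuine obstacle is that $A,B$ are unbounded and the domains $\dom(A^2)$, $\dom(B^2)$ need not be related usefully to $\dom(C)$, so these Taylor expansions do not converge in operator norm by themselves. The strategy of \cite{IchTamTamZag2001} is to invoke the self-adjointness of $C = A + B$ on $\dom(A) \cap \dom(B)$: this yields a smoothing resolvent $(I+C)^{-1}$ which is inserted to absorb the unbounded remainder factors, while the auxiliary condition $h(s) \leq 1 - \delta(\varepsilon)$ for $s \geq \varepsilon$ --- available for any $h \in \mathcal{K}_\beta$ --- suppresses the high-spectral part of $A$ and $B$ on which the pointwise bound $|h(x) - 1 + x| \leq [h]_2 x^2$ is ineffective. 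Combining these ingredients with the polar and Schwarz structure of the symmetric product $F(s)$ closes the argument and produces the uniform $O(s^2)$ estimate.

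The principal obstacle is precisely this one-step bound; it accounts for every hypothesis in the proposition, and in particular for the choice $\beta = 2$ (a smaller $\beta$ would degrade the remainder to $s^{1+\beta/2}$ and the final rate to $O(n^{-\beta/2})$). The optimality claim for $O(n^{-1})$ in the class $\mathcal{K}_2$ is a separate lower-bound matter, handled by computing $F(t/n)^n - e^{-tC}$ on a commuting spectral model with an explicit pair $f,g \in \mathcal{K}_2$ where the functional calculus exhibits a nonzero leading coefficient in the $1/n$-expansion, so the coefficients $\Gamma_j$ cannot be absorbed into a faster decay.
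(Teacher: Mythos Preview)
The paper does not itself prove this proposition; it is quoted from \cite{IchTamTamZag2001} and used as a black-box input for the lifting lemma. I therefore comment on your sketch on its own merits.

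Your reduction breaks at the one-step bound $\|F(s)-e^{-sC}\|\leq c\,s^{2}$, which is false whenever the generators are unbounded. Take $B=0$, so that $C=A$ and $F(s)=f(sA)$; the spectral theorem gives
\[
\|F(s)-e^{-sC}\| \;=\; \sup_{\lambda\in\sigma(A)}\bigl|f(s\lambda)-e^{-s\lambda}\bigr|,
\]
and if $\sigma(A)$ is unbounded this equals the fixed positive constant $\sup_{x\geq 0}|f(x)-e^{-x}|$, independent of $s$, for every $f\in\mathcal{K}_{2}$ other than $e^{-(\cdot)}$ (e.g.\ $f(x)=(1+x)^{-1}$). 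The $n$ summands in your telescoping identity are therefore each of order $O(1)$, not $O(s^{2})$, and the argument as written yields nothing. You do flag an ``obstacle'' and invoke resolvent insertion and the gap condition $h(s)\leq 1-\delta(\varepsilon)$, but you attach these devices to the task of proving the one-step norm bound --- a bound that cannot be proved because it is false.

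What the argument in \cite{IchTam2001}, \cite{IchTamTamZag2001} actually does is couple the one-step error to the smoothing already present in the telescoping sum: in each term $F(s)^{\,n-1-k}(F(s)-e^{-sC})\,e^{-ksC}$ the flanking factors absorb powers of $C$, so that one works with \emph{weighted} estimates of the type $\|(\1+C)^{-1}(F(s)-e^{-sC})(\1+C)^{-1}\|=O(s^{2})$ --- which does hold, and whose proof uses the relative bound $\|Au\|+\|Bu\|\leq c(\|Cu\|+\|u\|)$ supplied by the closed-graph theorem under the self-adjointness hypothesis on $C=A+B$ --- balanced against the semigroup smoothing $\|(\1+C)e^{-\tau C}\|=O(1+\tau^{-1})$. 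Even then the naive summation over $k$ gives only $O(1)$; reaching $O(n^{-1})$ requires a genuinely two-sided use of the smoothing and a separate treatment of the boundary indices, which is the technical core of the cited papers. Your outline names the right ingredients but places them at the wrong step.
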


The first \textit{lifting } lemma yields sufficient conditions that allow to
strengthen the \textit{strong} operator convergence to the $\|\cdot\|_\phi$-norm convergence in
the the symmetrically-normed ideal $\mathcal{C}_{\phi}(\mathcal{H})$.
%
%
\begin{lem}\label{lem6-2.7}
Let self-adjoint operators: $X \in \mathcal{C}_{\phi}(\mathcal{H})$, $Y \in \mathcal{C}_{\infty}(\mathcal{H})$
and $Z \in \mathcal{L}(\mathcal{H})$. If $\{Z(t)\}_{t \ge 0}$, is a family of
self-adjoint bounded operators such that
\begin{equation}\label{6-2.49}
s-\lim_{t \to +0}Z(t) = Z \ ,
\end{equation}
then
\begin{equation}\label{6-2.50}
\lim_{r\to\infty}\sup_{t \in [0,\tau]}\|(Z(t/r) - Z)YX\|_\phi =
\lim_{r\to\infty}\sup_{t \in [0,\tau]}\|XY(Z(t/r) - Z)\|_\phi = 0 \ ,
\end{equation}
for any $\tau \in (0,\infty)$.
\end{lem}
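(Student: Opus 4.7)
The plan is to reduce the symmetric-norm estimates in (\ref{6-2.50}) to operator-norm convergence and then invoke the ideal property (\ref{6-2.12-4}) of $\|\cdot\|_\phi$. The natural grouping places the singular difference next to the compact factor: writing $(Z(t/r)-Z)YX = [(Z(t/r)-Z)Y]\cdot X \cdot I$ and applying (\ref{6-2.12-4}),
\begin{equation*}
\|(Z(t/r)-Z)YX\|_\phi \le \|(Z(t/r)-Z)Y\|\,\|X\|_\phi,
\end{equation*}
and symmetrically $\|XY(Z(t/r)-Z)\|_\phi \le \|X\|_\phi\,\|Y(Z(t/r)-Z)\|$. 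Everything therefore reduces to proving that $\sup_{t\in[0,\tau]}\|(Z(t/r)-Z)Y\|$ and $\sup_{t\in[0,\tau]}\|Y(Z(t/r)-Z)\|$ vanish as $r\to\infty$.

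The core is the classical fact that a bounded family converging strongly, when composed with a compact operator, converges in operator norm. The Banach--Steinhaus theorem applied to $s\text{-}\lim_{t\to+0}Z(t)=Z$ produces a uniform bound $M:=\sup_{s\in[0,s_0]}\|Z(s)-Z\|<\infty$ for some $s_0>0$. Given $\varepsilon>0$, the norm-compactness of $\overline{Y(B_1)}$ yields a finite $\varepsilon/(4M)$-net $\{v_1,\dots,v_N\}$ covering this set; pointwise strong convergence supplies a single $\delta>0$ such that $\|(Z(s)-Z)v_i\|<\varepsilon/2$ for every $s\in(0,\delta)$ and every $i$; a two-term triangle inequality then gives $\|(Z(s)-Z)Yx\|<\varepsilon$ uniformly for $\|x\|\le 1$. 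Choosing $r$ so large that $\tau/r<\delta$ yields $\sup_{t\in(0,\tau]}\|(Z(t/r)-Z)Y\|\le\varepsilon$, which is the first half of the reduction.

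For the second half I would exploit the self-adjointness hypothesis directly. Since $Z(t)$, $Z$, and $Y$ are all self-adjoint,
\begin{equation*}
\|Y(Z(t/r)-Z)\| = \bigl\|\bigl((Z(t/r)-Z)Y\bigr)^{\!*}\bigr\| = \|(Z(t/r)-Z)Y\|,
\end{equation*}
so the previous paragraph's estimate applies without change, and combined with the two ideal-norm bounds this establishes (\ref{6-2.50}).

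The main obstacle I anticipate is the interaction between the uniformity over $t\in[0,\tau]$ and the limit $r\to\infty$. It is resolved by the change of variables $s=t/r$: as $r\to\infty$, the parameter $s$ ranges over the shrinking interval $[0,\tau/r]$, so the uniform supremum in $t$ collapses to the single one-sided limit $\|(Z(s)-Z)Y\|\to 0$ as $s\to 0^+$. No continuity of $s\mapsto Z(s)$ away from zero is required; only the strong convergence at $s=0^+$, the Banach--Steinhaus uniform bound, and compactness of $Y$ are used.
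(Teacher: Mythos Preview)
Your proof is correct and follows essentially the same route as the paper: reduce to operator-norm convergence of $(Z(t/r)-Z)Y$ via the ideal inequality (\ref{6-2.12-4}), and then invoke the standard fact that strong convergence composed with a compact operator upgrades to norm convergence. The paper states that last fact in one line without justification, whereas you spell out the $\varepsilon$-net argument and the Banach--Steinhaus bound; you also handle the second limit in (\ref{6-2.50}) explicitly via self-adjointness, which the paper's proof leaves implicit.
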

\begin{proof} Note that (\ref{6-2.49}) yields the strong operator convergence
$s-\lim_{r\to\infty}Z(t/r) = Z$,
%
%
%
%
uniformly in $t \in [0,\tau]$. Since $Y \in \mathcal{C}_{\infty}(\mathcal{H})$, this implies
\begin{equation}\label{6-2.52}
\lim_{r\to\infty}\sup_{t \in [0,\tau]}\|(Z(t/r) - Z)Y\| = 0 \ .
\end{equation}
Since $\mathcal{C}_{\phi}(\mathcal{H})$ is a Banach space with symmetric norm (\ref{6-2.12-4}) that verifies
$\|Z X\|_\phi \leq \|Z\|\|X\|_\phi$, one gets the estimate
\begin{equation}\label{6-2.53}
\|(Z(t/r) - Z)YX\|_\phi \le \|(Z(t/r) - Z)Y\|\|X\|_\phi \ ,
\end{equation}
which together with (\ref{6-2.52}) give the prove of (\ref{6-2.50}).
\hfill $\Box$ \end{proof}


The second \textit{lifting} lemma allows to estimate the rate of convergence of the Trotter-Kato
product formula in the norm (\ref{6-2.14}) of symmetrically-normed ideal $\mathcal{C}_{\phi}(\mathcal{H})$
via the {error bound} $\varepsilon(n)$ in the operator norm due to Proposition \ref{IV.6-6}.
\begin{lem}\label{thm6-5.0}
Let $A$ and $B$ be non-negative self-adjoint operators on the separable Hilbert
space $\mathcal{H}$, that satisfy the conditions of Proposition \ref{IV.6-6}.
Let $f,g \in {\mathcal{K}_{2}}$ be such that $F(t_0) \in \mathcal{C}_{\phi}(\mathcal{H})$ for some $t_0 > 0$.

If $\Gamma_{t_0} \varepsilon(n)$, $n \in \mathbb{N}$, is the operator-norm
error bound away from $t_0 > 0$ of the Trotter-Kato product formula for
$\{f(tA)g(tB)\}_{t \ge 0}$, then for some $\Gamma_{2t_0}^{\phi} > 0$ the function
$\varepsilon_\phi(n) := \{\varepsilon([n/2]) + \varepsilon([(n+1)/2])\}$, $n \in \mathbb{N}$,
defines the error bound away from $2t_0$ of the Trotter-Kato product formula in the ideal
$\mathcal{C}_{\phi}(\mathcal{H})$:
\begin{equation}\label{6-1.16-2}
\|[f(tA/n)g(tB/n)]^n - e^{-tC}\|_{\phi} = \Gamma_{2t_0}^{\phi}\varepsilon_\phi(n) \ , \ \ \  n\to \infty.
\qquad t \ge 2t_0 \ .
\end{equation}
Here $[x] := \max\{l \in \mathbb{N}_0: l \le x\}$, for  $x \in \mathbb{R}^{+}_{0}$.
\end{lem}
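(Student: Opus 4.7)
The plan is to follow the lifting scheme of \cite{NeiZag1999}: split the $n$-fold product $P_n(t) := [f(tA/n)g(tB/n)]^n$ into two halves and, after a telescopic rewrite of the difference from $e^{-tC}$, apply the Banach ideal inequality $\|XY\|_\phi \leq \|X\|\,\|Y\|_\phi$ to trade an operator-norm factor (bounded by Proposition \ref{IV.6-6}) against a $\phi$-norm factor (controlled by the hypothesis $F(t_0) \in \mathcal{C}_\phi$). I set $s := t/n$, $n_1 := [n/2]$, $n_2 := [(n+1)/2]$, and $\tau_i := n_i s$, so that $n_1 + n_2 = n$ and $\tau_1 + \tau_2 = t$; for $t \geq 2t_0$ and $n$ sufficiently large both $\tau_i \geq t_0$. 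Using $P_n(t) = [f(sA)g(sB)]^{n_1}\cdot[f(sA)g(sB)]^{n_2} =: P_{n_1}(\tau_1)\,P_{n_2}(\tau_2)$ and $e^{-tC} = e^{-\tau_1 C}\,e^{-\tau_2 C}$, I would write
\[
P_n(t) - e^{-tC} = \bigl(P_{n_1}(\tau_1) - e^{-\tau_1 C}\bigr)\,P_{n_2}(\tau_2) + e^{-\tau_1 C}\,\bigl(P_{n_2}(\tau_2) - e^{-\tau_2 C}\bigr),
\]
and then apply the ideal property in both orientations:
\[
\|P_n(t) - e^{-tC}\|_\phi \leq \|P_{n_1}(\tau_1) - e^{-\tau_1 C}\|\,\|P_{n_2}(\tau_2)\|_\phi + \|e^{-\tau_1 C}\|_\phi\,\|P_{n_2}(\tau_2) - e^{-\tau_2 C}\|.
\]

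The two operator-norm factors are controlled by $\Gamma\,\varepsilon(n_i)$ via Proposition \ref{IV.6-6}, applied with $n_i$ steps and final time $\tau_i \in [t_0,t]$ (the constant $\Gamma$ being uniform over this compact range). The estimate (\ref{6-1.16-2}) therefore follows once I secure uniform bounds on the two remaining $\phi$-norm factors from the hypothesis $F(t_0) \in \mathcal{C}_\phi$. For $\|e^{-\tau_1 C}\|_\phi$, a Golden-Thompson-type majorization yields $\sum_{j=1}^{N} s_j(e^{-t_0 C}) \leq \sum_{j=1}^{N} s_j(F(t_0))$ for every $N$; by the dominance property (Remark \ref{rem6-2.1-0}) this gives $e^{-t_0 C} \in \mathcal{C}_\phi$ with $\|e^{-t_0 C}\|_\phi \leq \|F(t_0)\|_\phi$, and the splitting $e^{-\tau_1 C} = e^{-(\tau_1 - t_0)C}\,e^{-t_0 C}$ together with the ideal property gives $\|e^{-\tau_1 C}\|_\phi \leq \|F(t_0)\|_\phi$ uniformly in $\tau_1 \geq t_0$. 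To control $\|P_{n_2}(\tau_2)\|_\phi$ analogously, I would extract from the product a sub-block corresponding to ``duration'' $\approx t_0$ whose singular values are log-majorized (in the spirit of Araki) by those of $F(t_0)$, then absorb the remaining bounded factors via the ideal property.

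The hard part is precisely this last step. Since $s = t/n \to 0$, the single factor $f(sA)g(sB)$ tends to the identity and lies in no proper ideal of $\mathcal{L}(\mathcal{H})$, so the bound on $\|P_{n_2}(\tau_2)\|_\phi$ cannot come from any individual factor but must be manufactured from a long sub-product whose singular-value asymptotics are pinned to those of $F(t_0)$. For the canonical choices $f(x) = e^{-x}$, $g(x) = e^{-x}$ one has $g(sB/2)^2 = g(sB)$, which permits the clean rewriting $[f(sA)g(sB)]^{n_2} = f(sA)\,g(sB/2)\,F(s)^{n_2 - 1}\,g(sB/2)$ and reduces the matter to a direct dominance of $F(s)^{n_2 - 1}$ by $F(t_0)$; for a general pair $f,g \in \mathcal{K}_2$ this identity fails and a more delicate log-majorization argument is needed. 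Once the uniform $\phi$-bounds on $P_{n_2}(\tau_2)$ and $e^{-\tau_1 C}$ are in hand, combining them with the operator-norm rates yields $\|P_n(t) - e^{-tC}\|_\phi \leq \Gamma^\phi_{2t_0}\bigl(\varepsilon([n/2]) + \varepsilon([(n+1)/2])\bigr)$ with $\Gamma^\phi_{2t_0}$ proportional to $\Gamma\,\|F(t_0)\|_\phi$, as claimed.
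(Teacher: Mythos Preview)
Your argument is the paper's: the same telescoping split with $k=[n/2]$, $m=[(n+1)/2]$, the same use of the ideal inequality $\|XY\|_\phi\le\|X\|\,\|Y\|_\phi$ to trade an operator-norm factor (controlled by Proposition~\ref{IV.6-6}) against a $\phi$-norm factor, and the same final constant $\Gamma^\phi_{2t_0}=\Gamma_{t_0}\|F(t_0)\|_\phi$. The only place you hesitate---the $\phi$-bound on $P_{n_2}(\tau_2)$ for general $f,g$---the paper dispatches via the algebraic identity
\[
\bigl(f(sA)g(sB)\bigr)^{m}=f(sA)\,g(sB)^{1/2}\bigl[g(sB)^{1/2}f(sA)g(sB)^{1/2}\bigr]^{m-1}g(sB)^{1/2},
\]
which holds for \emph{every} Kato function $g$ because one takes the operator square root $g(sB)^{1/2}$, not $g(sB/2)$; the self-adjoint middle block (a positive contraction) is then bounded in $\|\cdot\|_\phi$ by $\|F(t_0)\|_\phi$ once $(m-1)s\ge t_0$, and the paper invokes Araki's log-order inequality~\cite{Ara1990} for this step as well as for $\|e^{-ktC/n}\|_\phi\le\|F(t_0)\|_\phi$. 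So your ``hard part'' dissolves as soon as you replace $g(sB/2)$ by $g(sB)^{1/2}$; no separate log-majorization machinery beyond Araki is needed.
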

\begin{proof}
To prove the assertion for the family $\{f(tA)g(tB)\}_{t \ge 0}$ we use
decompositions $n = k + m$, $k \in \mathbb{N}$ and $m = 2,3,\ldots \  $,
$n \ge 3$, for representation
\begin{eqnarray}\label{6-5.0-14}
\lefteqn{\hspace{0.0cm} (f(tA/n)g(tB/n))^n - e^{-tC} = }\\
& &  \left((f(tA/n)g(tB/n))^k - e^{-ktC/n}\right)(f(tA/n)g(tB/n))^m  \nonumber \\
& & + \ e^{-ktC/n}\left((f(tA/n)g(tB/n))^m - e^{-mtC/n}\right)\ .\nonumber
\end{eqnarray}

Since by conditions of lemma $F(t_0) \in \mathcal{C}_{\phi}(\mathcal{H})$, definition (\ref{6-1.16-1})
and representation $f(tA/n)g(tB/n))^m = f(tA/n)g(tB/n)^{1/2}F(t/n)^{m-1}g(tB)^{1/2}$
yield
\begin{equation}\label{6-5.0-16}
\|(f(tA/n)g(tB/n))^m\|_\phi \le \|F(t_0)\|_\phi \ ,
\end{equation}
for $t$ such that $t_0 \le {(m-1)}t /{n} \le (m-1)t_0$ and $m-1 \ge 1$.

Note that for self-adjoint operators $e^{-tC}$ and $F(t)$ by Araki's log-order inequality for
compact operators \cite{Ara1990} one gets for $kt/n \ge t_0$ the bound of $e^{-ktC/n}$ in the
$\|\cdot\|_\phi$-norm:
\begin{equation}\label{6-5.0-17}
\|e^{-ktC/n}\|_\phi \le \|F(t_0)\|_\phi \ .
\end{equation}
Since by Definitions \ref{def6-2.1-2} and \ref{def6-2.3} the ideal $\mathcal{C}_{\phi}(\mathcal{H})$ is
a Banach space, from (\ref{6-5.0-14})-(\ref{6-5.0-17}) we obtain the estimate
\begin{eqnarray}\label{6-5.0-18}
&&\|(f(tA/n)g(tB/n))^n - e^{-tC}\|_\phi \le \\
&&\|F(t_0)\|_\phi \ \|(f(tA/n)g(tB/n))^k - e^{-ktC/n}\|   \nonumber \\
&&+ \|F(t_0)\|_\phi \ \|(f(tA/n)g(tB/n))^m - e^{-mtC/n}\| \ ,  \nonumber
\end{eqnarray}
for $t$ such that: $(1 + {(k+1)}/{(m-1)})t_0 \le t \le nt_0$, $m \ge 2$ and $t \ge (1 + {m}/{k})t_0$.

Now, by conditions of lemma $\Gamma_{t_0} \varepsilon(\cdot)$ is the operator-norm error
bound away from $t_0$, for any interval $[a,b] \subseteq (t_0,+\infty)$. Then there exists $n_0 \in \mathbb{N}$
such that
\begin{equation}\label{6-5.0-19}
\|(f(tA/n)g(tB/n))^k - e^{-ktC/n}\| \le \Gamma_{t_0} \varepsilon(k) \ ,
\end{equation}
for ${k}t/{n} \in [a,b] \Leftrightarrow
t \in [(1 + {m}/{k})a,(1+ {m}/{k})b]$ and
\begin{equation}\label{6-5.0-20}
\|(f(tA/n)g(tB/n))^m - e^{-mtC/n}\| \le \Gamma_{t_0} \varepsilon(m) \ ,
\end{equation}
for ${m}t /{n} \in [a,b] \Leftrightarrow t \in [(1 + {k}/{m})a,(1 + {k}/{m})b]$ for all $n > n_0 $.

Setting $m := [(n+1)/2]$ and $k = [n/2]$, $n \ge 3$, we satisfy $n = k + m$ and $m \ge 2$, as well as,
$\lim_{n\to\infty} {(k+1)}/{(m-1)} = 1$, $\lim_{n\to\infty} {m}/{k} = 1$ and
$\lim_{n\to\infty} {k}/{m} = 1$. Hence, for any interval
$[\tau_0,\tau] \subseteq (2t_0,+\infty)$ we find that
$[\tau_0,\tau] \subseteq [(1 + {(k+1)}/{(m-1)})t_0, nt_0]$ for sufficiently large $n$.
Moreover, choosing $[\tau_0/2,\tau/2] \subseteq (a,b) \subseteq (t_0,+\infty)$ we satisfy
$[\tau_0,\tau] \subseteq [(1 + {m}/{k})a,(1 + {m}/{k})b]$ and
$[\tau_0,\tau] \subseteq [(1 + {k}/{m})a, (1 + {k}/{m})b]$ again for
sufficiently large $n$.

Thus, for any interval $[\tau_0,\tau] \subseteq (2t_0,+\infty)$ there
is $n_0 \in \mathbb{N}$  such that (\ref{6-5.0-18}), (\ref{6-5.0-19})
and (\ref{6-5.0-20}) hold for $t \in [\tau_0,\tau]$ and $n \ge n_0$. Therefore,
(\ref{6-5.0-18}) yields the estimate
\begin{eqnarray}\label{6-5.0-21}
&&\|(f(tA/n)g(tB/n))^n - e^{-tC}\|_\phi \le \\
&&\hspace{1cm} \Gamma_{t_0} \ \|F(t_0)\|_\phi \{\varepsilon([n/2]) + \varepsilon([(n+1)/2])\} \ , \nonumber
\end{eqnarray}
for $t \in [\tau_0,\tau] \subseteq(2t_0,+\infty)$ and $n \ge n_0$. Hence,
$\Gamma_{2t_0}^{\phi} := \Gamma_{t_0} \ \|F(t_0)\|_\phi$ and
$\Gamma_{2t_0}^{\phi} \varepsilon_\phi(\cdot)$ is an error bound in the Trotter-Kato product formula
(\ref{6-1.16-2}) away from $2t_0$ in $\mathcal{C}_{\phi}(\mathcal{H})$ for
the family $\{f(tA)g(tB)\}_{t \ge 0}$.

The lifting Lemma \ref{lem6-2.7} allows to extend the proofs for other approximants:
$\{g(tB)f(tA)\}_{t \ge 0}$, $\{F(t)\}_{t \ge 0}$ and $\{T(t)\}_{t \ge 0}$.
\hfill $\Box$
\end{proof}

Now we apply Lemma \ref{thm6-5.0} in Dixmier ideal
$\mathcal{C}_{\phi}(\mathcal{H}) = \mathcal{C}_{1, \infty}(\mathcal{H})$. This concerns the norm convergence
(\ref{6-1.16-2}), but also the estimate of the convergence rate for Dixmier traces:
\begin{equation}\label{7-3.6}
|{\rm{Tr}}{_{\omega}}(e^{-tC}) -  {\rm{Tr}}{_{\omega}}(F(t/n)^n)| \leq
\Gamma^{\omega} \varepsilon_{\omega}(n) \ .
\end{equation}
In fact, it is the same (up to $\Gamma^{\omega}$) for all Trotter-Kato approximants:
$\{T(t)\}_{t\geq 0}$, $\{f(t)g(t)\}_{t\geq 0}$, and $\{g(t)f(t)\}_{t\geq 0}$.

Indeed, since by inequality (\ref{7-2.19-1}) and  Lemma \ref{thm6-5.0} for $t \in [\tau_0,\tau]$ and $n \ge n_0$,
one has
\begin{eqnarray}
|{\rm{Tr}}{_{\omega}}(e^{-tC}) -  {\rm{Tr}}{_{\omega}}(F(t/n)^n)| \leq \|e^{-tC} - F(t/n)^n\|_{1, \infty}
\le \Gamma_{2t_0}^{\phi} \ \varepsilon_{1, \infty}(n) \ ,  \label{7-3.7}
\end{eqnarray}
we obtain for the rate in (\ref{7-3.6}): $\varepsilon_{\omega}(\cdot) = \varepsilon_{1, \infty}(\cdot)$.
Therefore, the estimate of the convergence rate for Dixmier traces (\ref{7-3.6}) and for
$\|\cdot\|_{1, \infty}$-convergence in (\ref{7-3.7}) are \textit{entirely} defined by the operator-norm
error bound $\varepsilon(\cdot)$ from Lemma \ref{thm6-5.0} and have the form:
\begin{equation}
\varepsilon_{1, \infty}(n) := \{\varepsilon([n/2]) +
\varepsilon([(n+1)/2])\} \ , \ n \in \mathbb{N} \ . \label{7-3.8}
\end{equation}

Note that for the particular case of Proposition \ref{IV.6-6}, these arguments yield for (\ref{6-5.0-21})
the explicit convergence rate asymptotics $O(n^{-1})$ for the Trotter-Kato formulae and consequently,
the same asymptotics for convergence rates of the Trotter-Kato formulae for the Dixmier trace (\ref{7-3.6}),
(\ref{7-3.7}).

Therefore, we proved in the Dixmier ideal $\mathcal{C}_{1, \infty}(\mathcal{H})$ the following assertion.

\begin{theorem}\label{pro7-1.3}
Let $f,g \in {\mathcal{K}_{\beta}}$ with $\beta = 2$, and let $A$, $B$ be
non-negative self-adjoint operators in $\mathcal{H}$ such that the operator sum
$C := A + B $ is \textit{self-adjoint} on domain $dom(C):=dom(A)\cap dom(B)$.

If $F(t_0) \in \mathcal{C}_{1, \infty}(\mathcal{H})$ for some $t_0 > 0$,
then the Trotter-Kato product formulae converge for $n\to \infty$ in the $\|\cdot\|_{1, \infty}$-norm:
\begin{eqnarray*}
&& \|[f(tA/n)g(tB/n)]^n - e^{-tC}\|_{1, \infty} = O(n^{-1}) \ ,
\ \|[g(tB/n)f(tA/n)]^n - e^{-tC}\|_{1, \infty} = O(n^{-1})
  \ , \\
&& \|F(t/n)^n - e^{-tC}\|_{1, \infty} = O(n^{-1}) \ \ , \ \ \ \ \|T(t/n)^n - e^{-tC}\|_{1, \infty} = O(n^{-1}) \ ,
\end{eqnarray*}
away from $2t_0$. The rate $O(n^{-1})$ of convergence is optimal in the sense of {\rm{\cite{IchTamTamZag2001}}}.

By virtue of (\ref{7-3.7}) the same asymptotics $O(n^{-1})$ of the convergence rate are valid
for convergence the Trotter-Kato formulae for the Dixmier trace:
\begin{eqnarray*}
&& |{\rm{Tr}}{_{\omega}}([f(tA/n)g(tB/n)]^n) - {\rm{Tr}}{_{\omega}}(e^{-tC})| = O(n^{-1}) \ , \\
&& |{\rm{Tr}}{_{\omega}}([g(tB/n)f(tA/n)]^n) - {\rm{Tr}}{_{\omega}}(e^{-tC})| = O(n^{-1}) \ , \\
&& |{\rm{Tr}}{_{\omega}}(F(t/n)^n) - {\rm{Tr}}{_{\omega}}(e^{-tC})| = O(n^{-1}) \ \ ,
\ \ \  |{\rm{Tr}}{_{\omega}}(T(t/n)^n) - {\rm{Tr}}{_{\omega}}(e^{-tC})| = O(n^{-1}) \ ,
\end{eqnarray*}
away from $2t_0$.
\end{theorem}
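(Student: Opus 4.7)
The plan is to deduce the theorem as a direct specialization of the lifting Lemma \ref{thm6-5.0} to the particular symmetrically-normed ideal $\mathcal{C}_\phi(\mathcal{H}) = \mathcal{C}_{1,\infty}(\mathcal{H})$, combined with the operator-norm error bound of Proposition \ref{IV.6-6}, and finally combined with the trace-norm inequality \eqref{7-2.19-1} to handle the Dixmier trace assertions. Since Proposition \ref{pro7-1.2} confirms that $\mathcal{C}_{1,\infty}(\mathcal{H})$ is a Banach ideal in $\mathcal{L}(\mathcal{H})$ with symmetric norm $\|\cdot\|_{1,\infty}$ of the form \eqref{6-2.14}, the hypotheses of Lemma \ref{thm6-5.0} are met once we assume $F(t_0) \in \mathcal{C}_{1,\infty}(\mathcal{H})$.

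The first step is to apply Proposition \ref{IV.6-6} to $f,g \in \mathcal{K}_2$ and $C = A+B$ self-adjoint on $\dom(A)\cap \dom(B)$: this supplies the operator-norm error bound $\varepsilon(n) = O(n^{-1})$ away from some $t_0 > 0$ for each of the four Trotter--Kato approximants. The second step is to invoke Lemma \ref{thm6-5.0} with $\phi$ corresponding to the $\|\cdot\|_{1,\infty}$-norm: inequality \eqref{6-5.0-21} then gives
\begin{equation*}
\|[f(tA/n)g(tB/n)]^n - e^{-tC}\|_{1,\infty} \le \Gamma_{t_0}\,\|F(t_0)\|_{1,\infty}\,\{\varepsilon([n/2])+\varepsilon([(n+1)/2])\},
\end{equation*}
valid on any $[\tau_0,\tau]\subseteq (2t_0,+\infty)$ for $n$ large enough. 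Since $\varepsilon(k) = O(k^{-1})$ and $[n/2],[(n+1)/2] \sim n/2$, the bracket on the right is $O(n^{-1})$, which yields the stated rate. The extension from $\{f(tA)g(tB)\}$ to the three remaining approximants $\{g(tB)f(tA)\}$, $\{F(t)\}$, $\{T(t)\}$ proceeds exactly as in the concluding sentence of the proof of Lemma \ref{thm6-5.0}, by the auxiliary Lemma \ref{lem6-2.7}.

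For the Dixmier-trace assertions, I would use the continuity bound \eqref{7-2.19-1}, namely $|{\rm Tr}_\omega(Z)| \le \|Z\|_{1,\infty}$ for $Z \in \mathcal{C}_{1,\infty}(\mathcal{H})$. Applying it to $Z := [f(tA/n)g(tB/n)]^n - e^{-tC}$ (and analogously for the three other Trotter--Kato differences), and using linearity of ${\rm Tr}_\omega$ from Proposition \ref{pro7-2.1}, one obtains
\begin{equation*}
|{\rm Tr}_\omega([f(tA/n)g(tB/n)]^n) - {\rm Tr}_\omega(e^{-tC})| \le \|[f(tA/n)g(tB/n)]^n - e^{-tC}\|_{1,\infty} = O(n^{-1}),
\end{equation*}
and similarly for the three other approximants. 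The optimality claim follows because any improvement of the $\|\cdot\|_{1,\infty}$-rate would, via Proposition \ref{IV.6-6} and the two-sided control $\|\cdot\| \le \|\cdot\|_{1,\infty}$, improve the operator-norm rate, which is already known to be optimal by \cite{IchTamTamZag2001}.

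The only substantive point that could cause trouble is verifying that $\|F(t_0)\|_{1,\infty}$ actually enters as a finite multiplicative constant on all four approximant estimates uniformly in $t \in [\tau_0,\tau]$; this is addressed by the choice $t \ge 2t_0$ together with the representation $f(tA/n)g(tB/n))^m = f(tA/n)g(tB/n)^{1/2} F(t/n)^{m-1} g(tB)^{1/2}$ already exploited in the proof of Lemma \ref{thm6-5.0}, and by Araki's log-order inequality used there to bound $\|e^{-ktC/n}\|_{1,\infty}$ by $\|F(t_0)\|_{1,\infty}$. Once this is in place, everything else is routine transfer from the operator-norm to the $\|\cdot\|_{1,\infty}$-topology.
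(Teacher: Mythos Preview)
Your proposal is correct and follows essentially the same approach as the paper: specialize Lemma \ref{thm6-5.0} to $\mathcal{C}_\phi = \mathcal{C}_{1,\infty}$, feed in the operator-norm rate $\varepsilon(n)=O(n^{-1})$ from Proposition \ref{IV.6-6} to obtain $\varepsilon_{1,\infty}(n)=O(n^{-1})$, and then use the inequality \eqref{7-2.19-1} to transfer the $\|\cdot\|_{1,\infty}$-estimates to the Dixmier trace. Your optimality argument via $\|\cdot\|\le\|\cdot\|_{1,\infty}$ is slightly more explicit than the paper's, which simply records that optimality is inherited from Proposition \ref{IV.6-6}, but the content is the same.
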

Optimality of the estimates in Theorem \ref{pro7-1.3} is a heritage of the optimality in Proposition \ref{IV.6-6}.
Recall that in particular this means that in contrast to the Lie product formula for \textit{bounded} generators
$A$ and $B$, the \textit{symmetrisation} of approximants $\{f(t)g(t)\}_{t\geq 0}$, and $\{g(t)f(t)\}_{t\geq 0}$
by $\{F(t)\}_{t\geq 0}$ and $\{T(t)\}_{t\geq 0}$, does not yield (in general) the improvement of the
convergence rate, see {\rm{\cite{IchTamTamZag2001}}} and discussion in \cite{Zag2005}.

We resume that the \textit{lifting} Lemmata \ref{lem6-2.7} and  \ref{thm6-5.0} are a general method to study
the convergence in symmetrically-normed ideals $\mathcal{C}_{\phi}(\mathcal{H})$ as soon as it is established in
$\mathcal{L}(\mathcal{H})$ in the operator norm topology. The crucial is to check that for any of the
\textit{key} Kato functions (e.g. for $\{F(t)\}_{t\geq 0}$) there exists $t_0 > 0$ such that
$F(t)|_{t\geq t_0} \in \mathcal{C}_{\phi}(\mathcal{H})$. Sufficient conditions for that one can find in
\cite{NeiZag1999a}-\cite{NeiZag1999c}, or in \cite{Zag2003}.

\vspace{0.5cm}
\noindent {\bf Acknowledgments.}
I am thankful to referee for useful remarks and suggestions.
\vspace{10mm}

\end{document}